\documentclass[12pt,a4paper]{article}
\usepackage[utf8]{inputenc}
\usepackage{amsmath,amssymb,amsthm,amsfonts}
\usepackage{bm}
\usepackage{algorithmic}
\usepackage{mathtools}
\usepackage{algorithm}
\usepackage{geometry}
\usepackage{times}
\usepackage{hyperref}
\hypersetup{hidelinks}
\usepackage{cite}
\usepackage{xcolor}
\usepackage{enumitem}
\usepackage{bbm}
\usepackage{graphicx}
\usepackage{float}
\usepackage{booktabs}
\theoremstyle{plain}
\newtheorem{theorem}{Theorem}[section]
\newtheorem{lemma}[theorem]{Lemma}
\newtheorem{corollary}[theorem]{Corollary}
\newtheorem{proposition}[theorem]{Proposition}

\theoremstyle{definition}
\newtheorem{definition}[theorem]{Definition}
\newtheorem{example}[theorem]{Example}

\theoremstyle{remark}
\newtheorem{remark}[theorem]{Remark}
\newcommand{\Qs}{\mathbb{Q}_s}
\newcommand{\R}{\mathbb{R}}

\title{Iterative Methods for Computing the Moore--Penrose Inverse of Split-Quaternion Matrices with Applications}
\author{
Salman Ahmadi-Asl\thanks{Innopolis University, Innopolis, Russia, Email: s.ahmadiasl@innopolis.ru} \and
Valentin Leplat\thanks{ Innopolis University, Innopolis, Russia, Email: valentin.leplat@gmail.com} \and
Mohammad S. Alkousa\thanks{Innopolis University, Innopolis, Russia, Email: m.alkousa@innopolis.ru} 
}
\date{}
\begin{document}

\maketitle

\footnotetext[1]{The first and second authors contributed equally to this work.}

\begin{abstract}
We study iterative methods for computing the Moore--Penrose inverse of split-quaternion matrices. We first establish a consistent framework based on a \(2\times2\) real representation and the associated \(i\)-conjugate transpose. This representation gives a direct definition of the Moore--Penrose inverse and clarifies the treatment of nonzero zero divisors. We then analyze Newton--Schulz iterations for rectangular and rank-deficient matrices. Using a thin singular value decomposition of the real representative, we derive the convergence conditions, the evolution of the projector residuals, and an exact relation between the residual and the error. The resulting analysis applies both to an embedded real implementation and to a native split-quaternion iteration, which are shown to be equivalent under the representation. We also propose a low-degree polynomial initialization inspired by Souriau's inverse recursion. The polynomial is obtained by a least-squares approximation of the inverse Gram factor and is combined with a spectral acceptance test and a safe fallback initialization. Finally, we apply the proposed methods to cross and CUR approximations of split-quaternion matrices. We characterize the optimal middle factor for fixed sampled rows and columns and give conditions under which the cheaper cross factor gives an exact reconstruction. Numerical experiments illustrate the accuracy of the methods and the practical benefit of the polynomial initialization.

\noindent\textbf{Keywords:} split-quaternion matrix; Moore--Penrose inverse; Newton--Schulz iteration; polynomial initialization; real representation; CUR approximation
\end{abstract}

\section{Introduction}
\label{sec:introduction}
Quaternion matrix computations arise in least-squares problems, generalized
inverses, vector-sensor signal processing, and low-rank approximation
\cite{ahmadi2017iterative,ahmadi2017efficient,kyrchei2025quaternion,
le2004singular,ahmadi2026pass}. More broadly, quaternions are widely used
to represent rotations and orientations in computer graphics and robotics
\cite{shoemake1985animating,yuan1988closed}, and have also been studied in
quantum mechanics \cite{finkelstein1962foundations,de2000right}. Standard
quaternions, introduced by Hamilton in 1843, form a division algebra over
the real numbers, meaning that every nonzero quaternion has a multiplicative
inverse \cite{hamilton1843species}.

Split quaternions, introduced by Cockle \cite{cockle1849iii} in the
mid-nineteenth century, have a fundamentally different algebraic structure.
Unlike Hamilton quaternions, they contain nontrivial zero divisors and
nilpotent elements arising from their indefinite quadratic form. These
features make split quaternions suitable for certain physical models,
including electromagnetic field representations
\cite{ozdemir2022kinematic} and Lorentz transformations
\cite{antonuccio2015split,ozdemir2014eigenvalues}.

The matrix theory of Hamilton quaternions is now well developed, with
results on eigenvalue problems \cite{wood1985quaternionic,de2000right},
singular value decompositions \cite{le2004singular}, and generalized
inverses \cite{kyrchei2025quaternion}. The corresponding theory for split
quaternions has received comparatively less attention. At the scalar and
geometric levels, \"Ozdemir \cite{ozdemir2009roots} studied roots of split
quaternions and their geometric interpretation, while Kula and Yayl\i{}
\cite{kula2007split} considered split-quaternion representations in
Lorentzian geometry.

At the matrix level, split-quaternion matrices were studied
in~\cite{alagoz2012split}. More recently, matrix equations over split
quaternions have also received attention
\cite{si2024classical,liu2019consistency,gao2024anti}.

A fundamental difficulty in split-quaternion matrix theory is that the
natural multiplicative quadratic form is indefinite rather than positive
definite. Consequently, the usual spectral theory for real or complex
matrices does not transfer directly. The presence of zero divisors also
complicates the definition and computation of inverses and generalized
inverses. These features make the choice of adjoint and real representation
particularly important when defining the Moore--Penrose inverse.

A split quaternion is written as
\[
q=q_1+q_2\mathbf{i}+q_3\mathbf{j}+q_4\mathbf{k},
\qquad
q_1,q_2,q_3,q_4\in\mathbb{R},
\]
where
\[
\mathbf{i}^2=-1,
\qquad
\mathbf{j}^2=\mathbf{k}^2=1,
\qquad
\mathbf{i}\mathbf{j}=\mathbf{k}=-\mathbf{j}\mathbf{i},
\]
and consequently
\[
\mathbf{j}\mathbf{k}=-\mathbf{i}=-\mathbf{k}\mathbf{j},
\qquad
\mathbf{k}\mathbf{i}=\mathbf{j}=-\mathbf{i}\mathbf{k}.
\]

The Moore--Penrose inverse extends ordinary inversion to rectangular and
singular matrices. In the split-quaternion setting, its definition depends
on the adjoint used in the Penrose equations. The real representation is
therefore particularly useful: it makes this choice explicit and gives
access to standard real matrix algorithms.

The scalar Moore--Penrose inverse of split quaternions was studied
in~\cite{ablamowicz2020moore,cao2022moore}, while matrix extensions based
on real representations and split-quaternion decompositions were considered
in~\cite{wang2024singular}. Our recent work~\cite{leplat2025iterative}
developed iterative methods for computing the Moore--Penrose inverse of
Hamilton quaternion matrices. The split-quaternion setting requires
different algebraic conventions because of the presence of zero divisors
and the role of the \(i\)-conjugate transpose. Moreover, the thin-SVD
argument developed here removes the full-rank restriction used in the
direct Newton--Schulz analysis of~\cite{leplat2025iterative} and
establishes convergence for matrices of arbitrary rank.

Against this background, we analyze Newton--Schulz iterations for
split-quaternion matrices in two forms: an embedded implementation applied
to their real representatives and a native implementation written directly
in split-quaternion arithmetic. The two sequences are shown to be exactly
equivalent under the real representation. The main contributions are as
follows.

\begin{itemize}
    \item We state consistent conventions for the two conjugations, the Euclidean norm, the indefinite quadratic form, the \(i\)-conjugate transpose, and the real representation. We define the Moore--Penrose inverse through this representation and prove the scalar zero-divisor formula, existence, uniqueness, and the basic inherited identities.

    \item We give a complete Newton--Schulz analysis for rectangular and rank-deficient real matrices. A thin SVD yields exact formulas for the iterates and projector residuals, together with the error recurrence and an exact residual--error identity. The analysis is then transferred to both embedded and native split-quaternion iterations.

    \item We introduce a Souriau-inspired LS--Gram polynomial initialization. The polynomial is fitted to the actual Gram spectrum, checked against the Newton--Schulz convergence condition, and combined with a safe scaled-transpose fallback. We also compare its interpretation with a Chebyshev inverse polynomial.

    \item For cross and CUR approximations, we prove that \(C^\dagger A R^\dagger\) is a Frobenius-optimal middle factor for fixed sampled columns and rows. We also give a rank-revealing condition under which the cheaper factor \(W^\dagger\) coincides with it and gives exact reconstruction.

    \item MATLAB and Python implementations are used to assess the embedded iteration, the cross approximation, and the polynomial initialization.
\end{itemize}

The paper is organized as follows. Section~\ref{sec:preliminaries} introduces the algebraic conventions and the real representation. Section~\ref{Sec:MP} develops the Moore--Penrose inverse. Section~\ref{Sec:pro} presents the embedded and native Newton--Schulz methods and the polynomial initialization. Section~\ref{sec:cur} discusses cross and CUR approximations. Numerical experiments are reported in Section~\ref{sec:numerical}, and conclusions are given in Section~\ref{sec:conclusion}.

\section{Preliminaries}\label{sec:preliminaries}
The algebra of split quaternions is denoted by \(\mathbb{Q}_s\). It is isomorphic to the Clifford algebra \(C\ell_{1,1}\) and to the real matrix algebra \(\operatorname{Mat}(2,\mathbb{R})\). The set of \(m\times n\) split-quaternion matrices is denoted by \(\mathbb{Q}_s^{m\times n}\).

\subsection{Conjugations and basic scalar quantities}
For
\[
    q=q_1+q_2\mathbf{i}+q_3\mathbf{j}+q_4\mathbf{k},
\]
the full conjugate and the \(i\)-conjugate are, respectively,
\[
    \overline q=q_1-q_2\mathbf{i}-q_3\mathbf{j}-q_4\mathbf{k},
    \qquad
    \tau(q)=q_1-q_2\mathbf{i}+q_3\mathbf{j}+q_4\mathbf{k}.
\]
The full conjugation is used to define the split-quaternion quadratic form
\begin{equation}\label{norm_2}
    N(q):=q\overline q =q_1^2+q_2^2-q_3^2-q_4^2.
\end{equation}
The quantity \(N(q)\) is indefinite and is therefore not a norm. A nonzero scalar \(q\) is a zero divisor precisely when \(N(q)=0\). For numerical errors, we use the Euclidean component norm instead
\begin{equation}\label{norm}
    \|q\|_{\mathrm E}^2 :=q_1^2+q_2^2+q_3^2+q_4^2.
\end{equation}

\subsection{The real representation}
Define the scalar map \(\sigma:\mathbb{Q}_s\to\mathbb{R}^{2\times2}\) by
\begin{equation}\label{eq:sigma_scalar}
    q^\sigma :=
    \begin{bmatrix}
    q_1+q_3 & -q_2+q_4\\
    q_2+q_4 & q_1-q_3
    \end{bmatrix}.
\end{equation}
In particular,
\[
    1^\sigma=\begin{bmatrix}1&0\\0&1\end{bmatrix},\quad \mathbf{i}^\sigma=\begin{bmatrix}0&-1\\1&0\end{bmatrix},\quad \mathbf{j}^\sigma=\begin{bmatrix}1&0\\0&-1\end{bmatrix},\quad \mathbf{k}^\sigma=\begin{bmatrix}0&1\\1&0\end{bmatrix}.
\]
These four real matrices satisfy the same multiplication table as \(1,\mathbf{i},\mathbf{j},\mathbf{k}\). Hence, by bilinearity, the map \(\sigma\) preserves products of split-quaternion scalars.

For a split-quaternion matrix
\[
    A=A_1+A_2\mathbf{i}+A_3\mathbf{j}+A_4\mathbf{k} \in\mathbb{Q}_s^{m\times n}, \qquad A_\ell\in\mathbb{R}^{m\times n},
\]
we use the block extension considered in \cite{wang2024singular},
\begin{equation}\label{eq:sigma_matrix}
    A^\sigma := 
    \begin{bmatrix}
    A_1+A_3 & -A_2+A_4\\
    A_2+A_4 & A_1-A_3
    \end{bmatrix} \in\mathbb{R}^{2m\times2n}.
\end{equation}

\begin{lemma}[Properties of the real representation] \label{lem:sigma_properties}
Let \(A,B\) be split-quaternion matrices of compatible sizes and let \(\alpha\in\mathbb{R}\). Then
\begin{align}
(A+B)^\sigma&=A^\sigma+B^\sigma, \label{eq:sigma_add} \\ (\alpha A)^\sigma&=\alpha A^\sigma,\label{eq:sigma_scale} 
\\ (AB)^\sigma&=A^\sigma B^\sigma. \label{thir_pro}
\end{align}
The map \(\sigma\) is bijective. If
\[
Y=
\begin{bmatrix}
Y_{11}&Y_{12}\\
Y_{21}&Y_{22}
\end{bmatrix} \in\mathbb{R}^{2m\times2n},
    \qquad Y_{ij}\in\mathbb{R}^{m\times n},
\]
then \(Y=X^\sigma\) for the unique matrix
\[
    X=X_1+X_2\mathbf{i}+X_3\mathbf{j}+X_4\mathbf{k},
\]
where
\begin{equation}\label{eq:sigma_inverse}
\begin{aligned}
    X_1&=\frac{Y_{11}+Y_{22}}{2},
    &\qquad
    X_2&=\frac{Y_{21}-Y_{12}}{2},\\
    X_3&=\frac{Y_{11}-Y_{22}}{2},
    &
    X_4&=\frac{Y_{21}+Y_{12}}{2}.
\end{aligned}
\end{equation}
We denote this inverse map by \(\sigma^{-1}\).
\end{lemma}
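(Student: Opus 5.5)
Additivity \eqref{eq:sigma_add} and real homogeneity \eqref{eq:sigma_scale} follow directly from \eqref{eq:sigma_matrix}, since every block of \(A^\sigma\) is a real linear combination of the components \(A_1,\dots,A_4\). For the product rule \eqref{thir_pro}, I will not multiply out the \(2\times2\) block matrices by brute force. Instead, I use the Kronecker structure of \eqref{eq:sigma_matrix}. With \(E_1=1^\sigma\), \(E_2=\mathbf{i}^\sigma\), \(E_3=\mathbf{j}^\sigma\), \(E_4=\mathbf{k}^\sigma\), one has
\[
A^\sigma=\sum_{\ell=1}^4 E_\ell\otimes A_\ell ,
\]
where \(E_\ell\otimes A_\ell\) is the block matrix with blocks \((E_\ell)_{rs}A_\ell\). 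This can be checked blockwise against \eqref{eq:sigma_matrix}.

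Write \(e_1=1,e_2=\mathbf{i},e_3=\mathbf{j},e_4=\mathbf{k}\). The real coefficient matrices commute with the \(e_\ell\), so
\[
AB=\sum_{\ell,p} A_\ell B_p\, e_\ell e_p .
\]
Each product satisfies \(e_\ell e_p=\varepsilon_{\ell p}e_{\pi(\ell,p)}\) with \(\varepsilon_{\ell p}=\pm1\). This gives
\[
(AB)^\sigma=\sum_{\ell,p}\varepsilon_{\ell p}\,E_{\pi(\ell,p)}\otimes A_\ell B_p .
\]
On the other side, the mixed-product rule \((E\otimes X)(F\otimes Y)=(EF)\otimes(XY)\) gives
\[
A^\sigma B^\sigma=\sum_{\ell,p}(E_\ell E_p)\otimes(A_\ell B_p).
\]
The scalar fact stated just before the lemma says the \(E_\ell\) obey the same multiplication table, so \(E_\ell E_p=\varepsilon_{\ell p}E_{\pi(\ell,p)}\). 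The two sums therefore agree.

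The one step needing care is the mixed-product identity for this block-Kronecker convention with rectangular \(A_\ell\in\mathbb{R}^{m\times n}\) and \(B_p\in\mathbb{R}^{n\times k}\). The \((r,s)\) block of \((E\otimes X)(F\otimes Y)\) is \(\sum_t E_{rt}F_{ts}XY=(EF)_{rs}XY\), so the identity holds. This is the main point to verify; the remainder is bookkeeping.

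For bijectivity, I show that \eqref{eq:sigma_inverse} defines a two-sided inverse.

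1. Given \(A\), substitute the blocks of \(A^\sigma\) into \eqref{eq:sigma_inverse}. For example,
\[
\tfrac12\bigl((A_1+A_3)+(A_1-A_3)\bigr)=A_1 ,
\]
and the other three components are recovered in the same way. Hence \(\sigma^{-1}(A^\sigma)=A\), which shows \(\sigma\) is injective.

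2. Conversely, take an arbitrary \(Y\) and define \(X\) by \eqref{eq:sigma_inverse}. Compute the blocks of \(X^\sigma\):
\[
X_1+X_3=Y_{11},\quad X_1-X_3=Y_{22},\quad X_2+X_4=Y_{21},\quad -X_2+X_4=Y_{12}.
\]
So \(X^\sigma=Y\), which shows \(\sigma\) is surjective.

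Uniqueness of \(X\) then follows from injectivity. Equivalently, the four block equations form an invertible linear system in \((X_1,X_2,X_3,X_4)\), since they split into two \(2\times2\) systems of the form \(\bigl[\begin{smallmatrix}1&1\\1&-1\end{smallmatrix}\bigr]\), each with determinant \(-2\neq0\).
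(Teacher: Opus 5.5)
Your proof is correct, but for the product rule you take a different route from the paper.

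The paper works with the \emph{interlaced} entrywise representative \([a_{ij}^\sigma]\). Multiplicativity of the scalar map gives \(((AB)_{ik})^\sigma=\sum_j a_{ij}^\sigma b_{jk}^\sigma\), so the interlaced representative is multiplicative. The layout \eqref{eq:sigma_matrix} then differs from it by fixed row and column permutations that cancel in the product.

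You instead stay in the layout \eqref{eq:sigma_matrix}, writing \(A^\sigma=\sum_\ell E_\ell\otimes A_\ell\) and using the mixed-product rule. The only algebraic input is then the multiplication table of the \(E_\ell\). The two arguments are the same fact seen through a perfect shuffle, since the interlaced representative is \(\sum_\ell A_\ell\otimes E_\ell\). Your version avoids the permutation bookkeeping that the paper leaves implicit: the column permutation attached to \(A\) is undone by the row permutation attached to \(B\), because both are indexed by the inner dimension. The paper's version makes it transparent that multiplicativity is inherited entrywise from the scalar case, for any consistent ordering of the real coordinates.

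For bijectivity you do slightly more than the paper. The paper only checks \(X^\sigma=Y\) and reads uniqueness off the formulas, whereas you verify both \(\sigma^{-1}(A^\sigma)=A\) and \((\sigma^{-1}(Y))^\sigma=Y\).

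One harmless nit: in the unknowns \((X_2,X_4)\), the second system has matrix \(\begin{bmatrix}1&1\\-1&1\end{bmatrix}\), with determinant \(2\). It takes your stated form only after ordering the unknowns as \((X_4,X_2)\).
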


\begin{proof}
The proof is given in Appendix~\ref{app:proof-sigma-properties}.
\end{proof}

The \(i\)-conjugation is the anti-involution associated with the real transpose.

\begin{lemma}[Transpose anti-involution] \label{lem:tau_transpose}
For all \(p,q\in\mathbb{Q}_s\),
\[
    \tau(\tau(q))=q,
    \qquad
    \tau(pq)=\tau(q)\tau(p),
    \qquad
    (\tau(q))^\sigma=(q^\sigma)^T.
\]
For a matrix
\[
    A=A_1+A_2\mathbf{i}+A_3\mathbf{j}+A_4\mathbf{k},
\]
define the \(i\)-conjugate transpose
\begin{equation}\label{eq:H_adjoint}
    A^H :=A_1^T-A_2^T\mathbf{i}+A_3^T\mathbf{j}+A_4^T\mathbf{k}.
\end{equation}
Then
\begin{equation}\label{eq:adjoint_representation}
    (A^H)^\sigma=(A^\sigma)^T, \qquad (AB)^H=B^H A^H.
\end{equation}
\end{lemma}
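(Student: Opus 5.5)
The plan is to prove the scalar identities first, using the real representation, and then lift them to matrices blockwise.

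\emph{Scalar facts.} The involution \(\tau(\tau(q))=q\) is immediate, since \(\tau\) only flips the sign of \(q_2\). For the transpose identity, apply \eqref{eq:sigma_scalar} to \(\tau(q)=q_1-q_2\mathbf{i}+q_3\mathbf{j}+q_4\mathbf{k}\). This gives
\[
\tau(q)^\sigma=\begin{bmatrix} q_1+q_3 & q_2+q_4\\ -q_2+q_4 & q_1-q_3\end{bmatrix},
\]
which is exactly \((q^\sigma)^T\): transposition swaps the off-diagonal entries \(-q_2+q_4\) and \(q_2+q_4\), and that swap is the same as negating \(q_2\).

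The anti-multiplicativity then needs no component expansion. By Lemma~\ref{lem:sigma_properties},
\[
\tau(pq)^\sigma=((pq)^\sigma)^T=(p^\sigma q^\sigma)^T=(q^\sigma)^T(p^\sigma)^T=\tau(q)^\sigma\tau(p)^\sigma=(\tau(q)\tau(p))^\sigma .
\]
Since \(\sigma\) is injective (Lemma~\ref{lem:sigma_properties}), this yields \(\tau(pq)=\tau(q)\tau(p)\).

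\emph{Matrix facts.} For \((A^H)^\sigma=(A^\sigma)^T\), substitute the components of \(A^H\), namely \(A_1^T,-A_2^T,A_3^T,A_4^T\), into \eqref{eq:sigma_matrix}. This gives the block matrix
\[
\begin{bmatrix} A_1^T+A_3^T & A_2^T+A_4^T\\ -A_2^T+A_4^T & A_1^T-A_3^T\end{bmatrix}.
\]
Transposing \eqref{eq:sigma_matrix} blockwise, which transposes each block and swaps the off-diagonal blocks, produces the same matrix.

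Then \((AB)^H=B^HA^H\) follows as in the scalar case. We have
\[
((AB)^H)^\sigma=((AB)^\sigma)^T=(A^\sigma B^\sigma)^T=(B^\sigma)^T(A^\sigma)^T=(B^H)^\sigma(A^H)^\sigma=(B^HA^H)^\sigma,
\]
using \eqref{thir_pro} twice. Injectivity of the matrix map \(\sigma\) then concludes.

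\emph{Main obstacle.} There is no real obstacle. The only care needed is the index bookkeeping in the block transpose: checking that the swap of the off-diagonal blocks \(-A_2+A_4\) and \(A_2+A_4\), combined with transposition, corresponds exactly to negating \(A_2\) and transposing every component. The advantage of routing products through \(\sigma\) is that it avoids a sixteen-term expansion of \(\tau(pq)\) in the basis \(1,\mathbf{i},\mathbf{j},\mathbf{k}\).
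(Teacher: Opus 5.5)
Your proof is correct and follows the paper's argument. The scalar identities come from direct substitution into the definition of \(q^\sigma\), and \(\tau(pq)=\tau(q)\tau(p)\) comes from transporting through \(\sigma\), reversing the transpose, and using injectivity; the only difference is at the matrix level, where the paper just says the identities follow entrywise, while you verify \((A^H)^\sigma=(A^\sigma)^T\) by an explicit block computation and reuse the \(\sigma\)-transport argument for \((AB)^H=B^HA^H\), a slightly more self-contained route to the same conclusion.
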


\begin{proof}
The proof is given in Appendix~\ref{app:proof-tau-transpose}.
\end{proof}

The full conjugate transpose
\[
    A^*:=A_1^T-A_2^T\mathbf{i}-A_3^T\mathbf{j}-A_4^T\mathbf{k}
\]
is a different operation. In general, \((A^*)^\sigma\neq(A^\sigma)^T\). The Moore--Penrose inverse considered in this paper is therefore defined with respect to \(H\), not with respect to \(*\).

We use the Euclidean Frobenius norm
\begin{equation}\label{eq:sq_frobenius}
    \|A\|_F^2 :=\sum_{i=1}^m\sum_{j=1}^n\|a_{ij}\|_{\mathrm E}^2 =\sum_{\ell=1}^4\|A_\ell\|_F^2.
\end{equation}

\begin{lemma}[Norm relation] \label{lem:norm_relation}
For every \(A\in\mathbb{Q}_s^{m\times n}\),
\begin{equation}\label{eq:norm_relation}
    \|A^\sigma\|_F^2=2\|A\|_F^2.
\end{equation}
\end{lemma}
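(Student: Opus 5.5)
The plan is a direct block-by-block computation from the definition \eqref{eq:sigma_matrix}. The Frobenius norm of a block matrix is the sum of the squared Frobenius norms of its blocks, so
\[
\|A^\sigma\|_F^2=\|A_1+A_3\|_F^2+\|-A_2+A_4\|_F^2+\|A_2+A_4\|_F^2+\|A_1-A_3\|_F^2 .
\]

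Next I would group the four blocks into two pairs, $\{A_1+A_3,\,A_1-A_3\}$ and $\{A_4-A_2,\,A_4+A_2\}$. To each pair I apply the parallelogram law for the Frobenius inner product $\langle X,Y\rangle=\operatorname{tr}(X^TY)$:
\[
\|X+Y\|_F^2+\|X-Y\|_F^2=2\|X\|_F^2+2\|Y\|_F^2 .
\]
The cross terms $\pm2\langle A_1,A_3\rangle$ and $\pm2\langle A_2,A_4\rangle$ cancel within each pair. The sum then becomes $2(\|A_1\|_F^2+\|A_2\|_F^2+\|A_3\|_F^2+\|A_4\|_F^2)$.

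Finally I invoke the componentwise expression \eqref{eq:sq_frobenius} to identify this sum with $2\|A\|_F^2$. The same argument can also be read entrywise: for a scalar $q$, the entries of $q^\sigma$ give $\|q^\sigma\|_F^2=2\|q\|_{\mathrm E}^2$. There is no genuine obstacle here. The only point needing care is the correct pairing of the blocks, so that the cross terms cancel; this works because of the sign pattern in \eqref{eq:sigma_matrix}.
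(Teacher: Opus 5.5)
Your proposal is correct and follows essentially the same route as the paper's proof. Both expand $\|A^\sigma\|_F^2$ blockwise, apply the parallelogram identity to the pairs $A_1\pm A_3$ and $A_4\pm A_2$, and identify $2\sum_{\ell=1}^4\|A_\ell\|_F^2$ with $2\|A\|_F^2$ via \eqref{eq:sq_frobenius}.
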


\begin{proof}
The proof is given in Appendix~\ref{app:proof-norm-relation}.
\end{proof}

For quantities used in the convergence analysis, we define
\[
    \|A\|_{\sigma,2}:=\|A^\sigma\|_2, \qquad \operatorname{rank}_\sigma(A):=\operatorname{rank}(A^\sigma).
\]
Thus, all singular values and spectral condition numbers appearing below are those of the real representative. This convention avoids ambiguity between different notions of rank for split-quaternion matrices.

\section{The Moore--Penrose Pseudoinverse of Split Quaternions}\label{Sec:MP}
We first treat the scalar case and then define the matrix Moore--Penrose inverse through the real representation.

\subsection{The scalar Moore--Penrose inverse}
For a scalar \(q\in\mathbb{Q}_s\), its Moore--Penrose inverse with respect to the anti-involution \(\tau\) is the unique scalar \(q^\dagger\) satisfying
\begin{align}
    qq^\dagger q&=q,\label{eq:scalar_P1}\\
    q^\dagger q q^\dagger&=q^\dagger,\label{eq:scalar_P2}\\
    \tau(qq^\dagger)&=qq^\dagger,\label{eq:scalar_P3}\\
    \tau(q^\dagger q)&=q^\dagger q.\label{eq:scalar_P4}
\end{align}

\begin{proposition}[Scalar formula] \label{prop:scalar_mp}
Let \(q\in\mathbb{Q}_s\). Then
\begin{equation}\label{mp:sq}
    q^\dagger=
    \begin{cases}
    \displaystyle \frac{\overline q}{N(q)},
    & N(q)\neq0,\\[8pt]
    \displaystyle \frac{\tau(q)}{2\|q\|_{\mathrm E}^2},
    & N(q)=0,\ q\neq0,\\[8pt]
    0,&q=0.
    \end{cases}
\end{equation}
Moreover,
\[
    (q^\dagger)^\sigma=(q^\sigma)^\dagger.
\]
\end{proposition}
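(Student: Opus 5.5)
The plan is to pass everything through the real representation. By Lemma~\ref{lem:sigma_properties}, \(\sigma\) is a multiplicative bijection. By Lemma~\ref{lem:tau_transpose}, \(\tau\) corresponds to transposition. Hence \(x\in\Qs\) satisfies \eqref{eq:scalar_P1}--\eqref{eq:scalar_P4} for \(q\) if and only if \(x^\sigma\) satisfies the four real Penrose equations for the \(2\times2\) matrix \(q^\sigma\). Since the real Moore--Penrose inverse exists and is unique, and since \(\sigma\) restricted to scalars is a bijection onto \(\R^{2\times2}\), the scalar \(q^\dagger:=\sigma^{-1}\big((q^\sigma)^\dagger\big)\) is the unique solution. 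This gives existence, uniqueness, and the identity \((q^\dagger)^\sigma=(q^\sigma)^\dagger\) at once. What remains is to verify that the explicit formula \eqref{mp:sq} equals \(\sigma^{-1}((q^\sigma)^\dagger)\). By uniqueness, it suffices to check that the candidate satisfies the Penrose equations; equivalently, that its \(\sigma\)-image is the real pseudoinverse of \(q^\sigma\).

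The first step is the observation \(\det q^\sigma=(q_1+q_3)(q_1-q_3)-(q_4-q_2)(q_4+q_2)=q_1^2+q_2^2-q_3^2-q_4^2=N(q)\). We also note that \(\overline q^{\,\sigma}\) is the adjugate of \(q^\sigma\), since conjugation sends the \(2\times 2\) matrix \(\begin{bmatrix}a&b\\c&d\end{bmatrix}\) to \(\begin{bmatrix}d&-b\\-c&a\end{bmatrix}\), which one reads off from \eqref{eq:sigma_scalar}.

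In the case \(N(q)\neq0\), the matrix \(q^\sigma\) is invertible and \((q^\sigma)^\dagger=(q^\sigma)^{-1}=\operatorname{adj}(q^\sigma)/\det(q^\sigma)=(\overline q/N(q))^\sigma\). Alternatively, one can verify directly that \(q\overline q=\overline q q=N(q)\) is real, so \(\overline q/N(q)\) is a two-sided inverse and the Penrose equations hold trivially. The case \(q=0\) is immediate.

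The main case, and the only real obstacle, is \(N(q)=0\) with \(q\neq0\). Here \(M:=q^\sigma\) is a nonzero singular \(2\times2\) matrix, so it has rank one. For a rank-one matrix \(M=uv^T\) one has \(M^\dagger=M^T/\|M\|_F^2\). This follows because \(MM^TM=\|u\|^2\|v\|^2M=\|M\|_F^2M\), and \(MM^T\) and \(M^TM\) are symmetric, so all four Penrose equations hold. Then Lemma~\ref{lem:tau_transpose} gives \(M^T=\tau(q)^\sigma\), and Lemma~\ref{lem:norm_relation} (the \(1\times1\) case) gives \(\|M\|_F^2=2\|q\|_{\mathrm E}^2\). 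Hence \((q^\sigma)^\dagger=\big(\tau(q)/(2\|q\|_{\mathrm E}^2)\big)^\sigma\), which matches \eqref{mp:sq}. The only point to check carefully is the rank-one pseudoinverse identity and that \(\det=N\); the rest is bookkeeping through \(\sigma^{-1}\).
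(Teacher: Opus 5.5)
Your proposal is correct and follows essentially the same route as the paper. Both arguments compute \(\det(q^\sigma)=N(q)\), invert with the \(2\times2\) inverse (adjugate) formula when \(N(q)\neq0\), and use the rank-one identity \(Q^\dagger=Q^T/\|Q\|_F^2\) with \(Q^T=(\tau(q))^\sigma\) and \(\|Q\|_F^2=2\|q\|_{\mathrm E}^2\) in the zero-divisor case, with uniqueness coming from uniqueness of the real pseudoinverse and bijectivity of \(\sigma\). The differences are cosmetic: you establish the equivalence of the scalar and real Penrose systems first, and you cite Lemma~\ref{lem:norm_relation} rather than expanding \(\|Q\|_F^2\) directly.
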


\begin{proof}
Let \(Q=q^\sigma\). Direct calculation gives
\begin{equation}\label{eq:det_sigma_q}
    \det(Q) =(q_1+q_3)(q_1-q_3)-(-q_2+q_4)(q_2+q_4) =N(q).
\end{equation}

Assume first that \(N(q)\neq0\). Then \(Q\) is invertible and its Moore--Penrose inverse is its ordinary inverse. Since
\[
    (\overline q)^\sigma =
    \begin{bmatrix}
    q_1-q_3&q_2-q_4\\
    -q_2-q_4&q_1+q_3
    \end{bmatrix},
\]
the usual \(2\times2\) inverse formula and \eqref{eq:det_sigma_q} give
\[
    Q^{-1}=\frac{(\overline q)^\sigma}{N(q)}.
\]
By injectivity of \(\sigma\), this corresponds to
\[
    q^{-1}=\frac{\overline q}{N(q)}.
\]
Since an invertible element has Moore--Penrose inverse equal to its ordinary inverse, the first branch follows.

Assume next that \(N(q)=0\) and \(q\neq0\). Then \(Q\neq0\), \(\det(Q)=0\), and therefore \(Q\) has rank one. For a nonzero rank-one real matrix \(Q=uv^T\),
\[
    Q^\dagger =\frac{vu^T}{\|u\|_2^2\|v\|_2^2} =\frac{Q^T}{\|Q\|_F^2}.
\]
Indeed, substituting this expression verifies the four real Penrose equations. In the present case, Lemma~\ref{lem:tau_transpose} gives
\[
    Q^T=(\tau(q))^\sigma,
\]
while direct expansion yields
\[
    \|Q\|_F^2 =(q_1+q_3)^2+(-q_2+q_4)^2+(q_2+q_4)^2+(q_1-q_3)^2 =2\|q\|_{\mathrm E}^2.
\]
Consequently,
\[
    Q^\dagger =\frac{(\tau(q))^\sigma}{2\|q\|_{\mathrm E}^2} =\left(\frac{\tau(q)}{2\|q\|_{\mathrm E}^2}\right)^\sigma.
\]
This proves the second branch. The case \(q=0\) is immediate. Since the real Moore--Penrose inverse is unique and \(\sigma\) is bijective, the scalar satisfying \eqref{eq:scalar_P1}--\eqref{eq:scalar_P4} is unique.
\end{proof}

\begin{example}[A nonzero zero divisor]
Let \(q=1+\mathbf{j}\). Then \(N(q)=0\) and
\[
    q^\sigma=\begin{bmatrix}2&0\\0&0\end{bmatrix}, \qquad (q^\sigma)^\dagger=\begin{bmatrix}1/2&0\\0&0\end{bmatrix}.
\]
Formula \eqref{mp:sq} gives
\[
    q^\dagger=\frac{1+\mathbf{j}}{4},
\]
whose real representative is exactly \((q^\sigma)^\dagger\).
\end{example}

\subsection{The matrix Moore--Penrose inverse}
The real representation provides a direct and unambiguous matrix definition.

\begin{definition}[Moore--Penrose inverse]
\label{def:matrix_mp}
Let \(A\in\mathbb{Q}_s^{m\times n}\). Its Moore--Penrose inverse with respect to the adjoint \(H\) is
\begin{equation} \label{eq:matrix_mp_definition}
    A^\dagger :=\sigma^{-1}\!\left((A^\sigma)^\dagger\right) \in\mathbb{Q}_s^{n\times m}.
\end{equation}
\end{definition}

\begin{theorem}[Existence, uniqueness, and the Penrose equations] \label{lem_mp_em}
For every \(A\in\mathbb{Q}_s^{m\times n}\), the matrix \(A^\dagger\) in Definition~\ref{def:matrix_mp} is the unique matrix \(X\in\mathbb{Q}_s^{n\times m}\) satisfying
\begin{align}
    AXA&=A,\label{eq:matrix_P1}\\
    XAX&=X,\label{eq:matrix_P2}\\
    (AX)^H&=AX,\label{eq:matrix_P3}\\
    (XA)^H&=XA.\label{eq:matrix_P4}
\end{align}
Moreover,
\begin{equation} \label{eq:mp_commutes_sigma}
    (A^\dagger)^\sigma=(A^\sigma)^\dagger.
\end{equation}
\end{theorem}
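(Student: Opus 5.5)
The plan is to transport everything through the real representation, using Lemma~\ref{lem:sigma_properties} and Lemma~\ref{lem:tau_transpose}. The key observation is that, for any \(X\in\mathbb{Q}_s^{n\times m}\), the four split-quaternion Penrose equations \eqref{eq:matrix_P1}--\eqref{eq:matrix_P4} for the pair \((A,X)\) are equivalent to the four real Penrose equations for the pair \((A^\sigma,X^\sigma)\). Once this equivalence is in hand, existence, uniqueness and \eqref{eq:mp_commutes_sigma} all follow from the corresponding classical facts for real matrices.

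\emph{Step 1 (the equivalence).} By multiplicativity \eqref{thir_pro}, \((AXA)^\sigma=A^\sigma X^\sigma A^\sigma\). Since \(\sigma\) is injective (it is bijective by Lemma~\ref{lem:sigma_properties}), \eqref{eq:matrix_P1} holds if and only if \(A^\sigma X^\sigma A^\sigma=A^\sigma\). The same argument treats \eqref{eq:matrix_P2}. For \eqref{eq:matrix_P3}, we use \eqref{eq:adjoint_representation} and \eqref{thir_pro} to get
\[
((AX)^H)^\sigma=((AX)^\sigma)^T=(A^\sigma X^\sigma)^T .
\]
Hence, again by injectivity, \((AX)^H=AX\) holds if and only if \((A^\sigma X^\sigma)^T=A^\sigma X^\sigma\). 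Condition \eqref{eq:matrix_P4} is handled identically. One point needs care: \(\sigma\) is applied to matrices of several different sizes (\(m\times n\), \(n\times m\), \(m\times m\), \(n\times n\)). Lemma~\ref{lem:sigma_properties} is stated for arbitrary sizes, so injectivity is available at each size.

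\emph{Step 2 (existence and \eqref{eq:mp_commutes_sigma}).} Set \(X:=A^\dagger=\sigma^{-1}((A^\sigma)^\dagger)\). Bijectivity of \(\sigma\) gives \(X^\sigma=(A^\sigma)^\dagger\), which is exactly \eqref{eq:mp_commutes_sigma}. The real Moore--Penrose inverse satisfies the four real Penrose equations, so by Step~1 the matrix \(X\) satisfies \eqref{eq:matrix_P1}--\eqref{eq:matrix_P4}.

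\emph{Step 3 (uniqueness).} Suppose \(Z\in\mathbb{Q}_s^{n\times m}\) also satisfies \eqref{eq:matrix_P1}--\eqref{eq:matrix_P4}. By Step~1, \(Z^\sigma\) satisfies the real Penrose equations for \(A^\sigma\). By uniqueness of the real Moore--Penrose inverse, \(Z^\sigma=(A^\sigma)^\dagger=X^\sigma\), and injectivity of \(\sigma\) then yields \(Z=X\).

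I do not expect a genuine obstacle. The only delicate point is the adjoint conditions: the argument depends on using \(H\) rather than \(*\), since only \(H\) satisfies \((A^H)^\sigma=(A^\sigma)^T\), as noted after Lemma~\ref{lem:tau_transpose}. One also has to check that the product \(AX\) has compatible dimensions when applying \eqref{eq:adjoint_representation} to it.
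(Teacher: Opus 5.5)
Your proposal is correct and follows the paper's proof essentially step for step: the paper likewise sets \(M=A^\sigma\) and \(Y=M^\dagger\), verifies the four Penrose equations for \(X=\sigma^{-1}(Y)\) using multiplicativity, \((A^H)^\sigma=(A^\sigma)^T\), and injectivity of \(\sigma\), and proves uniqueness by applying \(\sigma\) to a second solution and invoking uniqueness of the real Moore--Penrose inverse. Stating both directions as a single equivalence in your Step~1 is only a cosmetic reorganization of the same argument.
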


\begin{proof}
Set \(M=A^\sigma\) and \(Y=M^\dagger\). By definition, \(X=\sigma^{-1}(Y)\), so \(X^\sigma=Y\). Applying \(\sigma\) to the left-hand side of \eqref{eq:matrix_P1} and using Lemma~\ref{lem:sigma_properties},
\[
    (AXA)^\sigma =A^\sigma X^\sigma A^\sigma =MYM =M =A^\sigma.
\]
The injectivity of \(\sigma\) implies \(AXA=A\). Similarly,
\[
    (XAX)^\sigma=YMY=Y=X^\sigma,
\]
which gives \eqref{eq:matrix_P2}.

For the third equation, Lemma~\ref{lem:tau_transpose} yields
\[
    ((AX)^H)^\sigma =((AX)^\sigma)^T =(MY)^T.
\]
Since \(Y=M^\dagger\), the real Penrose equations give \((MY)^T=MY=(AX)^\sigma\). Injectivity then gives \((AX)^H=AX\). The proof of \eqref{eq:matrix_P4} is identical, using \((YM)^T=YM\).

It remains to prove uniqueness without assuming it in advance. Let \(Z\in\mathbb{Q}_s^{n\times m}\) satisfy \eqref{eq:matrix_P1}--\eqref{eq:matrix_P4}. Applying \(\sigma\) shows that \(Z^\sigma\) satisfies the four real Penrose equations for \(M=A^\sigma\). The real Moore--Penrose inverse is unique, hence
\[
    Z^\sigma=M^\dagger=Y=X^\sigma.
\]
Since \(\sigma\) is injective, \(Z=X\). This proves uniqueness and also establishes \eqref{eq:mp_commutes_sigma}.
\end{proof}

\begin{proposition}[Basic properties] \label{prop:mp_basic_properties}
Let \(A\in\mathbb{Q}_s^{m\times n}\), let \(\lambda\in\mathbb{R}\setminus\{0\}\), and let \(U,V\) be square split-quaternion matrices satisfying
\[
    U^HU=UU^H=I, \qquad V^HV=VV^H=I.
\]
Then
\begin{align*}
    (A^\dagger)^\dagger&=A,\\
    (A^H)^\dagger&=(A^\dagger)^H,\\
    (\lambda A)^\dagger&=\lambda^{-1}A^\dagger,\\
    (UAV)^\dagger&=V^H A^\dagger U^H.
\end{align*}
Moreover, \(AA^\dagger\) and \(A^\dagger A\) are \(H\)-Hermitian idempotents.
\end{proposition}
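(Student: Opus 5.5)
The plan is to transfer every identity to the real representative and then pull it back, using Theorem~\ref{lem_mp_em} (in particular \eqref{eq:mp_commutes_sigma}), Lemma~\ref{lem:sigma_properties}, and Lemma~\ref{lem:tau_transpose}. The general template is as follows. For a candidate identity $X = Z$ between split-quaternion matrices, we show $X^\sigma = Z^\sigma$ and conclude by injectivity of $\sigma$. Alternatively, we show that the right-hand side satisfies the four Penrose equations \eqref{eq:matrix_P1}--\eqref{eq:matrix_P4} and invoke the uniqueness part of Theorem~\ref{lem_mp_em}. I would use the first route wherever it is shortest.

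For the first three identities, write $M=A^\sigma$. Then $((A^\dagger)^\dagger)^\sigma=((A^\dagger)^\sigma)^\dagger=(M^\dagger)^\dagger=M=A^\sigma$, using \eqref{eq:mp_commutes_sigma} twice and the real identity $(M^\dagger)^\dagger=M$. Similarly, \eqref{eq:adjoint_representation} gives
\[
((A^H)^\dagger)^\sigma=((A^H)^\sigma)^\dagger=(M^T)^\dagger=(M^\dagger)^T=((A^\dagger)^\sigma)^T=((A^\dagger)^H)^\sigma.
\]
For the scaling identity, \eqref{eq:sigma_scale} gives $(\lambda A)^\sigma=\lambda M$, and the real identity $(\lambda M)^\dagger=\lambda^{-1}M^\dagger$ finishes the argument. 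Each of these rests on a standard real fact that I would quote rather than reprove.

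For the unitary identity, apply $\sigma$ to $U^HU=UU^H=I$. Using \eqref{thir_pro} and \eqref{eq:adjoint_representation}, this gives $(U^\sigma)^TU^\sigma=U^\sigma(U^\sigma)^T=I^\sigma$. Since $I^\sigma$ is the $2\times 2$ block identity, this is the real identity matrix, so $U^\sigma$ is a real orthogonal matrix, and likewise $V^\sigma$. Then
\[
((UAV)^\dagger)^\sigma=(U^\sigma M V^\sigma)^\dagger=(V^\sigma)^T M^\dagger (U^\sigma)^T .
\]
The second equality is the real fact that orthogonal multiplication commutes with the pseudoinverse. I would verify that fact quickly through the four real Penrose equations, or via the SVD. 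The right-hand side equals $(V^HA^\dagger U^H)^\sigma$ by \eqref{thir_pro} and \eqref{eq:adjoint_representation}, and injectivity of $\sigma$ concludes.

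Finally, the projector statement follows directly from Theorem~\ref{lem_mp_em}. Equation \eqref{eq:matrix_P1} gives $(AA^\dagger)^2=(AA^\dagger A)A^\dagger=AA^\dagger$, and \eqref{eq:matrix_P3} gives $H$-Hermitian symmetry; the case of $A^\dagger A$ is symmetric, using \eqref{eq:matrix_P1} together with \eqref{eq:matrix_P4}. I do not expect a serious obstacle. The only point needing care is confirming that the unitarity hypothesis relative to $H$, and not to $*$, transfers to genuine real orthogonality. That works precisely because \eqref{eq:adjoint_representation} holds for $H$, and the paper notes that the analogous statement fails for $*$.
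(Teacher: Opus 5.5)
Your proposal is correct and follows the paper's own argument. Each identity is pushed through $\sigma$ using $(A^\dagger)^\sigma=(A^\sigma)^\dagger$, $(A^H)^\sigma=(A^\sigma)^T$ and the product rule, the corresponding real identity is applied, injectivity of $\sigma$ pulls the result back, and the projector claims are read off from the Penrose equations. Your explicit check that $U^HU=UU^H=I$ makes $U^\sigma$ real orthogonal (because $I^\sigma$ is the real identity) fills in a step the paper only asserts.
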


\begin{proof}
The proof is given in Appendix~\ref{app:proof-basic-mp}.
\end{proof}

\begin{theorem}[Split quaternion SVD] \cite{wang2024singular}
Let \( A \in \Qs^{m \times n} \). There exist unitary split quaternion matrices \( U \in \Qs^{m \times m} \), \( V \in \Qs^{n \times n} \) and a diagonal matrix \( \Sigma \in \Qs^{m \times n} \) with diagonal entries of the form \( \sigma_i = a_i + b_i\mathbf{j} \) (with \( a_i, b_i \in \R \), \( a_i \geq 0 \)) such that:
\[
    A = U \Sigma V^H.
\]
\end{theorem}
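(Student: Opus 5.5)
The plan is to deduce this decomposition from the real SVD of $A^\sigma$ and transport it back through the bijection $\sigma$ of Lemma~\ref{lem:sigma_properties}. Three ingredients are needed.

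\textbf{Step 1: unitary matrices.} First I will check what a split-quaternion unitary matrix looks like under $\sigma$. By \eqref{thir_pro} and \eqref{eq:adjoint_representation},
\[
(U^HU)^\sigma=(U^\sigma)^TU^\sigma,\qquad (UU^H)^\sigma=U^\sigma(U^\sigma)^T,\qquad I^\sigma=I.
\]
Hence, by injectivity of $\sigma$, $U$ is unitary if and only if $U^\sigma$ is real orthogonal. Lemma~\ref{lem:sigma_properties} says $\sigma$ is a bijection onto all of $\mathbb{R}^{2m\times 2m}$. So every real orthogonal $P\in\mathbb{R}^{2m\times2m}$ equals $U^\sigma$ for the unitary matrix $U=\sigma^{-1}(P)$.

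\textbf{Step 2: diagonal matrices.} Next I will identify the real representatives of the admissible diagonal matrices. For a diagonal $\Sigma\in\Qs^{m\times n}$ with entries $a_i+b_i\mathbf{j}$, set $D_a=\operatorname{diag}(a_i)$ and $D_b=\operatorname{diag}(b_i)$, both $m\times n$. Formula \eqref{eq:sigma_matrix} gives
\[
\Sigma^\sigma=\begin{bmatrix} D_a+D_b & 0\\ 0 & D_a-D_b\end{bmatrix}.
\]
Conversely, take any real matrix of the form $\begin{bmatrix}D_1&0\\0&D_2\end{bmatrix}$ with $D_1,D_2$ rectangular diagonal. It is $\Sigma^\sigma$ with
\[
a_i=\tfrac12(d^{(1)}_i+d^{(2)}_i),\qquad b_i=\tfrac12(d^{(1)}_i-d^{(2)}_i),
\]
in agreement with \eqref{eq:sigma_inverse}. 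If $d^{(1)}_i,d^{(2)}_i\ge0$, then $a_i\ge0$ automatically.

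\textbf{Step 3: the construction.} Take a real SVD $A^\sigma=PSQ^T$ with $P,Q$ orthogonal and $S\in\mathbb{R}^{2m\times2n}$ having nonnegative entries on its main diagonal. The main obstacle is that $S$ need not have the block pattern of Step 2 when $m\neq n$. The entries $S_{m+k,n+k}$ of the lower-right block do not lie on the main diagonal of $S$.

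I will repair this with permutations. The block pattern has exactly $2\min(m,n)=\min(2m,2n)$ slots: $(k,k)$ and $(m+k,n+k)$ for $k\le\min(m,n)$. This equals the number of singular values. Distribute the singular values into these slots to form $S'$. Then there are permutation matrices $\Pi_1\in\mathbb{R}^{2m\times2m}$ and $\Pi_2\in\mathbb{R}^{2n\times2n}$ with $S=\Pi_1S'\Pi_2^T$. These permute rows and columns so that the entry at $(k,k)$, respectively $(m+k,n+k)$, of $S'$ moves to the matching diagonal position of $S$, and zero rows and columns go to the remaining positions.

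Thus $A^\sigma=(P\Pi_1)S'(Q\Pi_2)^T$, with $P\Pi_1$ and $Q\Pi_2$ orthogonal. Set
\[
U=\sigma^{-1}(P\Pi_1),\qquad V=\sigma^{-1}(Q\Pi_2),\qquad \Sigma=\sigma^{-1}(S').
\]
By Step 1, $U$ and $V$ are unitary. By Step 2, $\Sigma$ is diagonal with entries $a_i+b_i\mathbf{j}$ and $a_i\ge0$.

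Finally, $(U\Sigma V^H)^\sigma=P\Pi_1S'(Q\Pi_2)^T=A^\sigma$ by \eqref{thir_pro} and \eqref{eq:adjoint_representation}. Injectivity of $\sigma$ then gives $A=U\Sigma V^H$.
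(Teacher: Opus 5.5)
The paper gives no proof of this theorem; it quotes the result from \cite{wang2024singular}, so there is no in-paper argument to compare with. Your proposal is a correct, self-contained derivation that uses only the paper's own machinery, namely Lemma~\ref{lem:sigma_properties} and Lemma~\ref{lem:tau_transpose}.

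Steps 1 and 2 are right. Since $I^\sigma=I_{2m}$, $H$-unitarity of $U$ is equivalent to real orthogonality of $U^\sigma$. Also, $\sigma^{-1}$ of a block-diagonal pair of rectangular diagonal matrices has vanishing $\mathbf{i}$- and $\mathbf{k}$-components. The permutation repair in Step 3 is sound: the $2\min(m,n)$ slots $(k,k)$ and $(m+k,n+k)$ occupy distinct rows and distinct columns, and $\min(2m,2n)=2\min(m,n)$ leaves no diagonal entry of $S$ unmatched.

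Compared with simply importing the result, your route gives three extras.
\begin{enumerate}
\item It shows that the numbers $a_i\pm b_i$ are exactly singular values of $A^\sigma$. Your construction therefore delivers $|b_i|\le a_i$, which is stronger than the stated $a_i\ge 0$.
\item It makes visible that $\Sigma$ is not unique, since any pairing of the singular values of $A^\sigma$ into the slots works.
\item Combined with Theorem~\ref{lem_mp_em}, it immediately yields the formula $A^\dagger=V\Sigma^\dagger U^H$ from the subsequent remark, because $(V\Sigma^\dagger U^H)^\sigma=(Q\Pi_2)(S')^\dagger(P\Pi_1)^T=(A^\sigma)^\dagger$.
\end{enumerate}

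One interpretive caveat: your argument proves the theorem with ``unitary'' meaning $U^HU=UU^H=I$ for the $i$-conjugate transpose. That is the paper's convention, but it may differ from the normalization in the cited source.
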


\begin{remark}[Relation with the split-quaternion SVD]
Several properties derived above can also be obtained from the split-quaternion singular value decomposition developed in \cite{wang2024singular}. Under the corresponding definitions of split-quaternion unitary matrices and diagonal factors, a matrix \(A \in \mathbb{Q}_s^{m \times n}\) can be written as
\[
    A = U \Sigma V^H,
\]
and its Moore--Penrose inverse can be expressed as
\[
    A^\dagger = V \Sigma^\dagger U^H.
\]

In this paper, we instead define and analyze the Moore--Penrose inverse directly through the real representation:
\[
    A^\dagger = \sigma^{-1}\left((A^\sigma)^\dagger\right).
\]
This approach is particularly convenient for our purposes. First, it makes the role of the \(i\)-conjugate transpose explicit through the identity
\[
    (A^H)^\sigma = (A^\sigma)^T.
\]
Second, it provides a direct connection with the real Newton--Schulz iteration studied in the next section.

The split-quaternion SVD therefore gives an alternative characterization of the Moore--Penrose inverse, while the real representation provides the main foundation for the analysis and algorithms developed in this paper.
\end{remark}

\begin{corollary}[Transfer of convergent real sequences]
\label{cor_2}
Let \(A\in\mathbb{Q}_s^{m\times n}\), and let
\[
    X_k\in\mathbb{R}^{2n\times2m}
\]
satisfy \(X_k\to(A^\sigma)^\dagger\) in the real Frobenius norm. Then
\[
    \widehat X_k:=\sigma^{-1}(X_k)
    \longrightarrow A^\dagger
    \quad\text{in }\mathbb{Q}_s^{n\times m},
\]
and
\begin{equation} \label{eq:transfer_norm}
    \|\widehat X_k-A^\dagger\|_F =\frac{1}{\sqrt2}\|X_k-(A^\sigma)^\dagger\|_F.
\end{equation}
\end{corollary}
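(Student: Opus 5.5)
The plan is to combine linearity and bijectivity of \(\sigma\) from Lemma~\ref{lem:sigma_properties} with the norm relation of Lemma~\ref{lem:norm_relation} and the commutation identity \eqref{eq:mp_commutes_sigma} of Theorem~\ref{lem_mp_em}. Everything then reduces to an exact isometry (up to the factor \(\sqrt2\)) between the real and the split-quaternion errors.

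First, set \(\widehat X_k=\sigma^{-1}(X_k)\). This is well defined for every real \(X_k\in\mathbb{R}^{2n\times2m}\) because \(\sigma\) is bijective onto \(\mathbb{R}^{2n\times 2m}\). Note that \(X_k\) need not be structured in any special way, since every real \(2n\times2m\) matrix is a representative. Next, by Theorem~\ref{lem_mp_em}, \((A^\dagger)^\sigma=(A^\sigma)^\dagger\). Using additivity \eqref{eq:sigma_add} and scaling \eqref{eq:sigma_scale} with \(\alpha=-1\), this gives
\[
(\widehat X_k-A^\dagger)^\sigma=\widehat X_k^\sigma-(A^\dagger)^\sigma=X_k-(A^\sigma)^\dagger .
\]

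Applying Lemma~\ref{lem:norm_relation} to the split-quaternion matrix \(\widehat X_k-A^\dagger\in\mathbb{Q}_s^{n\times m}\) yields
\[
\|X_k-(A^\sigma)^\dagger\|_F^2=2\|\widehat X_k-A^\dagger\|_F^2 .
\]
Taking square roots gives \eqref{eq:transfer_norm}. Convergence then follows at once: the right-hand side of \eqref{eq:transfer_norm} tends to zero by hypothesis, so \(\|\widehat X_k-A^\dagger\|_F\to0\). Since \eqref{eq:sq_frobenius} is the Euclidean norm on all real components, this is convergence in \(\mathbb{Q}_s^{n\times m}\).

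There is no genuine obstacle here. The only point requiring care is to insist that the size-\((n\times m)\) version of Lemma~\ref{lem:norm_relation} is applicable to a difference of two matrices, and that the commutation \eqref{eq:mp_commutes_sigma} is used rather than assuming \(X_k\) has any block structure.
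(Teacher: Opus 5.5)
Your proof is correct and follows essentially the same route as the paper. The paper writes \(\widehat X_k-A^\dagger=\sigma^{-1}\bigl(X_k-(A^\sigma)^\dagger\bigr)\) using linearity of \(\sigma^{-1}\) and the definition \(A^\dagger=\sigma^{-1}((A^\sigma)^\dagger)\), which is equivalent to your use of linearity of \(\sigma\) with \eqref{eq:mp_commutes_sigma}; it then applies Lemma~\ref{lem:norm_relation} to get the factor \(1/\sqrt2\) and convergence, just as you do.
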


\begin{proof}
By linearity of \(\sigma^{-1}\),
\[
    \widehat X_k-A^\dagger =\sigma^{-1}\!\left(X_k-(A^\sigma)^\dagger\right).
\]
The norm identity follows from Lemma~\ref{lem:norm_relation}. Since the right-hand side tends to zero, so does the left-hand side.
\end{proof}

The definition suggests the following direct method.

\begin{algorithm}[H]
\caption{Direct computation of \(A^\dagger\) through the real representation.}
\label{MP_Alg}
\begin{algorithmic}[1]
\STATE \textbf{Input:} \(A=A_1+A_2\mathbf{i}+A_3\mathbf{j}+A_4\mathbf{k}\in\mathbb{Q}_s^{m\times n}\).
\STATE \textbf{Output:} \(A^\dagger\in\mathbb{Q}_s^{n\times m}\).
\STATE Form \(M=A^\sigma\in\mathbb{R}^{2m\times2n}\) using \eqref{eq:sigma_matrix}.
\STATE Compute \(Y=M^\dagger\in\mathbb{R}^{2n\times2m}\), for example by a real SVD.
\STATE Partition \(Y=\begin{bmatrix}Y_{11}&Y_{12}\\Y_{21}&Y_{22}\end{bmatrix}\) into \(n\times m\) blocks.
\STATE Recover \(A^\dagger=\sigma^{-1}(Y)\) using \eqref{eq:sigma_inverse}.
\RETURN \(A^\dagger\).
\end{algorithmic}
\end{algorithm}

The direct method is robust and provides a useful reference solution. For large dense representatives, however, an explicit SVD may be costly. This motivates iterative methods based primarily on matrix--matrix multiplication. Such methods do not reduce the cubic dense asymptotic order automatically, but they can be attractive when optimized multiplication kernels, suitable matrix structure, or moderate-accuracy approximations are available.

\section{Proposed Iterative Algorithms}
\label{Sec:pro}
We first analyze Newton--Schulz \cite{schulz1933iterative,ben2003generalized} for an arbitrary real rectangular matrix. The split-quaternion results will then follow through the real representation.

\subsection{Newton--Schulz for a real rectangular matrix}

For a square nonsingular matrix \(M\in\mathbb{R}^{p\times p}\), the Newton--Schulz iteration for approximating \(M^{-1}\) is
\begin{equation} \label{eq:NS-square}
    X_{k+1} = X_k(2I_p-MX_k) = 2X_k-X_kMX_k.
\end{equation}
It can be obtained by applying Newton's method to the nonlinear matrix
equation
\[
    X^{-1}-M=0.
\]
Indeed, the Fr\'echet derivative of
\[
    F(X)=X^{-1}-M
\]
at \(X\) is
\[
    DF(X)[H]=-X^{-1}HX^{-1}.
\]
The Newton correction \(H_k\) therefore satisfies
\[
    -X_k^{-1}H_kX_k^{-1} = -\bigl(X_k^{-1}-M\bigr),
\]
which gives
\[
    H_k=X_k-X_kMX_k.
\]
Hence
\[
    X_{k+1}=X_k+H_k=2X_k-X_kMX_k.
\]

The main property of this iteration is the squaring of the inverse
residual. If
\[
    R_k=I_p-MX_k,
\]
then
\[
    R_{k+1} = I_p-MX_{k+1} = R_k^2.
\]
Thus, if \(\|R_0\|_2<1\), the residual converges quadratically to zero.

For a rectangular matrix \(M\in\mathbb{R}^{p\times q}\), there is no two-sided inverse. In the full-row-rank case,
\[
    M^\dagger=M^T(MM^T)^{-1}, \qquad MM^\dagger=I_p,
\]
whereas in the full-column-rank case,
\[
    M^\dagger=(M^TM)^{-1}M^T, \qquad M^\dagger M=I_q.
\]
This motivates the rectangular Newton--Schulz iteration
\begin{equation} \label{eq:NS_real}
    X_{k+1} = X_k(2I_p-MX_k) = (2I_q-X_kM)X_k = 2X_k-X_kMX_k, \qquad X_0=\alpha M^T,
\end{equation}
where \(X_k\in\mathbb{R}^{q\times p}\). The two forms in \eqref{eq:NS_real} are algebraically equivalent. In an implementation, one may use the form involving the smaller square intermediate matrix.

When \(M\) is rank deficient, neither \(MM^\dagger\) nor \(M^\dagger M\) is an identity matrix. They are the orthogonal projectors
\[
    P_L=MM^\dagger, \qquad P_R=M^\dagger M.
\]
This does not prevent convergence from the initialization \(X_0=\alpha M^T\). As shown in the next theorem, the iterates remain in the nonzero singular subspaces of \(M\), every positive singular direction converges independently to the corresponding coefficient of \(M^\dagger\), and
\[
    MX_k\longrightarrow P_L, \qquad X_kM\longrightarrow P_R.
\]

\begin{theorem}[Exact Newton--Schulz convergence] \label{theorem1}
Let \(M\in\mathbb{R}^{p\times q}\) be nonzero and have rank \(r\), with positive singular values
\[
    \sigma_1\geq\sigma_2\geq\cdots\geq\sigma_r>0.
\]
Let \(\{X_k\}\) be generated by \eqref{eq:NS_real}. If
\begin{equation} \label{eq:alpha_interval}
    0<\alpha<\frac{2}{\sigma_1^2},
\end{equation}
then \(X_k\to M^\dagger\). More precisely, if
\[
    \theta_i:=1-\alpha\sigma_i^2, \qquad i=1,2,\ldots,r,
\]
then
\begin{equation}\label{eq:Xk_exact_formula}
    X_k =V_r\operatorname{diag}\!\left( \frac{1-\theta_1^{2^k}}{\sigma_1},\frac{1-\theta_2^{2^k}}{\sigma_2},\ldots, \frac{1-\theta_r^{2^k}}{\sigma_r} \right)U_r^T,
\end{equation}
where \(M=U_r\Sigma_rV_r^T\) is a thin SVD. In particular, with
\[
    P_L:=MM^\dagger=U_rU_r^T, \qquad P_R:=M^\dagger M=V_rV_r^T,
\]
one has
\begin{align}
    P_L-MX_k &=U_r\operatorname{diag}(\theta_1^{2^k},\theta_2^{2^k},\ldots,\theta_r^{2^k})U_r^T, \label{eq:left_projector_residual}
    \\ P_R-X_kM &=V_r\operatorname{diag}(\theta_1^{2^k},\theta_2^{2^k},\ldots,\theta_r^{2^k})V_r^T. \label{eq:right_projector_residual}
\end{align}
Hence, if
\[
    \rho_0:=\max_{1\leq i\leq r}|1-\alpha\sigma_i^2|<1,
\]
then
\begin{equation} \label{eq:projector_residual_rate}
    \|P_L-MX_k\|_2 =\|P_R-X_kM\|_2 =\rho_0^{2^k}.
\end{equation}
For the zero matrix, \(M^\dagger=0\) and the initialization gives \(X_k=0\) for all \(k\).
\end{theorem}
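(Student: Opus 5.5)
We fix a thin SVD \(M=U_r\Sigma_rV_r^T\) with \(U_r\in\mathbb{R}^{p\times r}\), \(V_r\in\mathbb{R}^{q\times r}\) having orthonormal columns and \(\Sigma_r=\operatorname{diag}(\sigma_1,\ldots,\sigma_r)\). The plan is to show by induction on \(k\) that every iterate has the form
\[
    X_k=V_rD_kU_r^T, \qquad D_k=\operatorname{diag}(d_1^{(k)},\ldots,d_r^{(k)}).
\]
This reduces the matrix recursion \eqref{eq:NS_real} to \(r\) independent scalar recursions. For \(k=0\) we have \(X_0=\alpha M^T=V_r(\alpha\Sigma_r)U_r^T\), so \(d_i^{(0)}=\alpha\sigma_i\). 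For the inductive step we use \(U_r^TU_r=V_r^TV_r=I_r\):
\[
    X_kMX_k=V_rD_k\Sigma_rD_kU_r^T,
\]
so \(X_{k+1}=V_r(2D_k-D_k^2\Sigma_r)U_r^T\). Each coordinate therefore obeys \(d_i^{(k+1)}=2d_i^{(k)}-\sigma_i(d_i^{(k)})^2\).

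The key step is to linearize this scalar map. Set \(e_i^{(k)}:=1-\sigma_id_i^{(k)}\). A direct computation gives
\[
    1-\sigma_i\bigl(2d-\sigma_id^2\bigr)=(1-\sigma_id)^2,
\]
so \(e_i^{(k+1)}=(e_i^{(k)})^2\). Since \(e_i^{(0)}=1-\alpha\sigma_i^2=\theta_i\), we get \(e_i^{(k)}=\theta_i^{2^k}\). Hence \(d_i^{(k)}=(1-\theta_i^{2^k})/\sigma_i\), which is \eqref{eq:Xk_exact_formula}.

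The projector residuals then follow directly:
\[
    MX_k=U_r\Sigma_rD_kU_r^T=U_r\operatorname{diag}(1-\theta_i^{2^k})U_r^T.
\]
Subtracting this from \(P_L=U_rU_r^T\) gives \eqref{eq:left_projector_residual}. The same computation with \(X_kM=V_rD_k\Sigma_rV_r^T\) and \(P_R=V_rV_r^T\) gives \eqref{eq:right_projector_residual}. The identities \(MM^\dagger=U_rU_r^T\) and \(M^\dagger M=V_rV_r^T\) come from the standard formula \(M^\dagger=V_r\Sigma_r^{-1}U_r^T\).

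For the norms, \(U_r\operatorname{diag}(\cdot)U_r^T\) with orthonormal \(U_r\) has spectral norm equal to the largest absolute diagonal entry, namely \(\max_i|\theta_i|^{2^k}=\rho_0^{2^k}\). The same holds with \(V_r\), which yields \eqref{eq:projector_residual_rate}. For convergence, condition \eqref{eq:alpha_interval} gives \(0<\alpha\sigma_i^2\le\alpha\sigma_1^2<2\), so \(|\theta_i|<1\) for every \(i\), and thus \(\rho_0<1\). Then \(\theta_i^{2^k}\to0\), so \(D_k\to\Sigma_r^{-1}\) and \(X_k\to V_r\Sigma_r^{-1}U_r^T=M^\dagger\). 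For \(M=0\), \(X_0=0\) and the recursion keeps \(X_k=0=M^\dagger\).

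I expect no serious obstacle. The only delicate point is the invariance argument: the iterates must never leave \(\operatorname{span}(V_r)\otimes\operatorname{span}(U_r)\), which is what makes the rank-deficient case work. This invariance depends on the specific initialization \(X_0=\alpha M^T\), and the induction should state it explicitly.
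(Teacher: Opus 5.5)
Your proposal is correct and matches the paper's proof essentially step for step: the same induction showing $X_k=V_rD_kU_r^T$ with $D_k$ diagonal, the same error variable $e_{i,k}=1-\sigma_i d_{i,k}$ satisfying $e_{i,k+1}=e_{i,k}^2$, and the same read-off of $MX_k$ and $X_kM$ to obtain the projector residuals. Your explicit spectral-norm justification via the orthonormal columns of $U_r$ and $V_r$, and your treatment of the case $M=0$, cover two points the paper states but does not spell out in its proof.
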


\begin{proof}
Let
\[
    M=U_r\Sigma_rV_r^T, \qquad \Sigma_r=\operatorname{diag}(\sigma_1,\sigma_2,\ldots,\sigma_r),
\]
be a thin SVD. Since
\[
    X_0=\alpha M^T =V_r(\alpha\Sigma_r)U_r^T,
\]
we first show by induction that every iterate has the form
\begin{equation} \label{eq:Xk_invariant_form}
    X_k=V_rD_kU_r^T,
\end{equation}
where \(D_k\) is diagonal. The statement is true at \(k=0\). If it holds at iteration \(k\), then
\begin{align*}
    X_{k+1}
    & =2V_rD_kU_r^T -V_rD_kU_r^T U_r\Sigma_rV_r^T V_rD_kU_r^T
    \\& =V_r\bigl(2D_k-D_k\Sigma_rD_k\bigr)U_r^T.
\end{align*}
Thus \eqref{eq:Xk_invariant_form} is preserved, with
\[
    D_{k+1}=2D_k-D_k\Sigma_rD_k.
\]
Writing \(D_k=\operatorname{diag}(d_{1,k},\ldots,d_{r,k})\), each singular direction evolves independently according to
\begin{equation} \label{eq:scalar_NS}
    d_{i,k+1}=d_{i,k}(2-\sigma_i d_{i,k}), \qquad d_{i,0}=\alpha\sigma_i.
\end{equation}
Define
\[
    e_{i,k}:=1-\sigma_i d_{i,k}.
\]
Then \eqref{eq:scalar_NS} gives
\begin{align*}
    e_{i,k+1}
    &=1-\sigma_i d_{i,k}(2-\sigma_i d_{i,k})\\
    &=1-2\sigma_i d_{i,k}+\sigma_i^2d_{i,k}^2\\
    &=(1-\sigma_i d_{i,k})^2 =e_{i,k}^2.
\end{align*}
Since \(e_{i,0}=1-\alpha\sigma_i^2=\theta_i\), repeated squaring yields
\[
    e_{i,k}=\theta_i^{2^k}.
\]
Solving for \(d_{i,k}\) gives
\[
    d_{i,k}=\frac{1-\theta_i^{2^k}}{\sigma_i},
\]
which proves \eqref{eq:Xk_exact_formula}.

Condition \eqref{eq:alpha_interval} implies
\[
    -1<1-\alpha\sigma_1^2 \leq 1-\alpha\sigma_i^2<1
\]
for each \(i\), so \(|\theta_i|<1\). Therefore \(\theta_i^{2^k}\to0\), and \eqref{eq:Xk_exact_formula} gives
\[
    X_k\longrightarrow V_r\Sigma_r^{-1}U_r^T =M^\dagger.
\]
Finally,
\begin{align*}
    MX_k &=U_r\Sigma_rD_kU_r^T =U_r\operatorname{diag}(1-\theta_i^{2^k})U_r^T,
    \\ X_kM &=V_rD_k\Sigma_rV_r^T =V_r\operatorname{diag}(1-\theta_i^{2^k})V_r^T,
\end{align*}
which proves \eqref{eq:left_projector_residual}--\eqref{eq:projector_residual_rate}.
\end{proof}

\begin{remark}[Optional regularization]
Theorem~\ref{theorem1} shows that the Newton--Schulz iteration converges directly to \(M^\dagger\) for matrices of arbitrary rank when \(X_0=\alpha M^T\). Regularization is therefore not required for this convergence result.

It can nevertheless be useful for noisy or nearly rank-deficient problems. For \(\lambda>0\), one may consider
\[
    X_\lambda = (M^TM+\lambda I_q)^{-1}M^T = M^T(MM^T+\lambda I_p)^{-1}.
\]
Using a thin singular value decomposition, one obtains
\[
    X_\lambda\longrightarrow M^\dagger \qquad\text{as}\qquad \lambda\downarrow0.
\]
This regularized approach was studied for quaternion matrices in \cite{leplat2025iterative}. Here, the thin-SVD analysis gives the stronger direct convergence result, while regularization remains a useful filtering strategy when small singular values should be damped.
\end{remark}

The thin-SVD proof also gives the exact error recurrence and explains why the four Penrose conditions hold at the limit.

\begin{corollary}[Error recurrence and quadratic bound] \label{cor:NS_error_recurrence}
Under the assumptions of Theorem~\ref{theorem1}, let
\[
    E_k:=X_k-M^\dagger.
\]
Then
\begin{equation} \label{eq:error_recurrence}
    E_{k+1}=-E_kME_k,
\end{equation}
and therefore
\begin{equation} \label{eq:quadratic_F_bound}
    \|E_{k+1}\|_F \leq \|M\|_2\|E_k\|_F^2.
\end{equation}
Moreover, \(MX_k\) and \(X_kM\) are symmetric for every \(k\), and the limit \(M^\dagger\) satisfies all four real Penrose equations.
\end{corollary}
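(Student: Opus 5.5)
The plan is to exploit the thin-SVD structure from Theorem~\ref{theorem1}, namely \(X_k=V_rD_kU_r^T\) with \(D_k\) diagonal, together with the identity \(M^\dagger MM^\dagger=M^\dagger\). For the error recurrence I would avoid coordinates at first and expand directly. Substituting \(X_k=M^\dagger+E_k\) into \(X_{k+1}=2X_k-X_kMX_k\) gives
\[
X_{k+1}=2M^\dagger+2E_k-(M^\dagger+E_k)M(M^\dagger+E_k).
\]
Expanding the product yields the four terms \(M^\dagger MM^\dagger=M^\dagger\), \(M^\dagger ME_k\), \(E_kMM^\dagger\) and \(E_kME_k\). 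I then need \(M^\dagger ME_k=E_k\) and \(E_kMM^\dagger=E_k\). Both follow from Theorem~\ref{theorem1}, because \(E_k=V_r(D_k-\Sigma_r^{-1})U_r^T\), while \(M^\dagger M=V_rV_r^T\) and \(MM^\dagger=U_rU_r^T\) act as the identity on the left and right factors respectively. The \(E_k\) is therefore supported in the same singular subspaces. Collecting terms gives
\[
X_{k+1}=2M^\dagger+2E_k-M^\dagger-2E_k-E_kME_k,
\]
that is, \(E_{k+1}=-E_kME_k\).

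The quadratic bound then follows from the submultiplicativity \(\|ABC\|_F\le\|A\|_F\|B\|_2\|C\|_F\). More precisely, \(\|E_kME_k\|_F\le\|E_k\|_F\|ME_k\|_2\le\|E_k\|_F\|M\|_2\|E_k\|_2\le\|M\|_2\|E_k\|_F^2\), using \(\|\cdot\|_2\le\|\cdot\|_F\).

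For the symmetry statement I would read it off the proof of Theorem~\ref{theorem1}. There \(MX_k=U_r\operatorname{diag}(1-\theta_i^{2^k})U_r^T\) and \(X_kM=V_r\operatorname{diag}(1-\theta_i^{2^k})V_r^T\), and both are visibly symmetric. For the Penrose equations at the limit, I would note that \(X_k\to M^\dagger\) by Theorem~\ref{theorem1}. The maps \(X\mapsto MXM-M\), \(X\mapsto XMX-X\) and \(X\mapsto (MX)^T-MX\) are continuous, and the symmetric matrices form a closed set. Hence symmetry of \(MX_k\) and \(X_kM\) passes to the limit. The first two equations can be checked directly from \(M^\dagger=V_r\Sigma_r^{-1}U_r^T\), or by observing that they hold in the limit of the exact formulas.

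The only delicate step is the cancellation of the cross terms. It relies on \(E_k\) lying in the range spaces, and that fact is exactly what the invariant form \eqref{eq:Xk_invariant_form} supplies. Without the initialization \(X_0=\alpha M^T\), the recurrence would pick up extra terms.
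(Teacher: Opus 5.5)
Your proof is correct. The Frobenius bound, the symmetry of \(MX_k\) and \(X_kM\), and the passage to the limit are handled as in the paper, so the only real difference is how you reach \(E_{k+1}=-E_kME_k\).

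The paper argues coordinatewise. With \(E_k=V_r(D_k-\Sigma_r^{-1})U_r^T\) and \(D_k\) diagonal, it checks the scalar identity \(d_{i,k+1}-\sigma_i^{-1}=-\sigma_i(d_{i,k}-\sigma_i^{-1})^2\) in each singular direction and then reassembles. You instead substitute \(X_k=M^\dagger+E_k\) and cancel the cross terms with the absorption identities \(M^\dagger ME_k=E_k=E_kMM^\dagger\).

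Your route never uses that \(D_k\) is diagonal. It only needs \(X_k=(M^\dagger M)X_k(MM^\dagger)\), and this property is itself preserved by \(X\mapsto 2X-XMX\). Your recurrence and quadratic bound therefore hold for every starting point in that class, for example \(X_0=M^TZM^T\) with an arbitrary real \(Z\), which the diagonal computation does not cover. For the same reason, your closing remark is more accurate if stated in terms of this range condition rather than the specific choice \(X_0=\alpha M^T\).

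The paper's coordinate computation, in turn, stays tied to the exact formula \eqref{eq:Xk_exact_formula}. It gives the finer per-direction statement \(d_{i,k}-\sigma_i^{-1}=-\theta_i^{2^k}/\sigma_i\), not just a matrix recurrence.
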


\begin{proof}
From \eqref{eq:Xk_invariant_form},
\[
    E_k =V_r(D_k-\Sigma_r^{-1})U_r^T.
\]
Using the scalar recurrence,
\begin{align*}
    d_{i,k+1}-\frac1{\sigma_i}
    &=2d_{i,k}-\sigma_i d_{i,k}^2-\frac1{\sigma_i}\\
    &=-\sigma_i\left(d_{i,k}-\frac1{\sigma_i}\right)^2.
\end{align*}
Collecting the singular directions gives \eqref{eq:error_recurrence}. The Frobenius bound follows from
\[
    \|E_kME_k\|_F \leq\|E_k\|_F\|M\|_2\|E_k\|_2 \leq\|M\|_2\|E_k\|_F^2.
\]
The matrices \(MX_k\) and \(X_kM\) are symmetric by the explicit formulas in the proof of Theorem~\ref{theorem1}. Passing to the limit gives the two symmetry Penrose equations. The remaining equations follow either from the limit \(M^\dagger\), or directly from the thin-SVD expression
\[
    MM^\dagger M=M, \qquad M^\dagger MM^\dagger=M^\dagger.
\]
\end{proof}

\begin{theorem}[Exact relation between the residual and the error]
\label{thm:residual_error}
Under the assumptions of Theorem~\ref{theorem1}, define
\[
    R_k:=MX_kM-M.
\]
Then
\begin{equation} \label{eq:residual_error_exact}
    E_k=M^\dagger R_kM^\dagger.
\end{equation}
Consequently,
\begin{equation} \label{eq:residual_error_bound}
    \|X_k-M^\dagger\|_F \leq \|M^\dagger\|_2^2 \cdot \|MX_kM-M\|_F.
\end{equation}
Equivalently,
\begin{equation} \label{eq:relative_residual_error_bound}
    \frac{\|X_k-M^\dagger\|_F}{\|M^\dagger\|_F} \leq \frac{\|M^\dagger\|_2^2\|M\|_F}{\|M^\dagger\|_F} \frac{\|MX_kM-M\|_F}{\|M\|_F}.
\end{equation}
\end{theorem}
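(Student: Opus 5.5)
The plan is to reuse the invariant form \eqref{eq:Xk_invariant_form} from the proof of Theorem~\ref{theorem1}. That theorem gives $X_k=V_rD_kU_r^T$ with $D_k$ diagonal, together with $M=U_r\Sigma_rV_r^T$ and $M^\dagger=V_r\Sigma_r^{-1}U_r^T$. With these, both sides of \eqref{eq:residual_error_exact} can be written in the same thin-SVD coordinates and compared entrywise on the diagonal.

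First, compute the residual. Using $U_r^TU_r=V_r^TV_r=I_r$,
\[
R_k=MX_kM-M=U_r\bigl(\Sigma_rD_k\Sigma_r-\Sigma_r\bigr)V_r^T .
\]
Second, sandwich it with the pseudoinverse, again using the orthonormality of the columns of $U_r$ and $V_r$:
\[
M^\dagger R_kM^\dagger=V_r\Sigma_r^{-1}\bigl(\Sigma_rD_k\Sigma_r-\Sigma_r\bigr)\Sigma_r^{-1}U_r^T=V_r\bigl(D_k-\Sigma_r^{-1}\bigr)U_r^T .
\]
This is exactly the expression $E_k=V_r(D_k-\Sigma_r^{-1})U_r^T$ already used in the proof of Corollary~\ref{cor:NS_error_recurrence}, which proves \eqref{eq:residual_error_exact}.

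A basis-free alternative, useful as a check, is the following. Because $X_k$ lies in the range structure given by \eqref{eq:Xk_invariant_form}, we have $P_RX_k=X_k=X_kP_L$. Also $M^\dagger MM^\dagger=M^\dagger$. Hence
\[
M^\dagger(MX_kM-M)M^\dagger=P_RX_kP_L-M^\dagger=X_k-M^\dagger .
\]

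Third, the norm bound \eqref{eq:residual_error_bound} follows from submultiplicativity in the mixed form $\|ABC\|_F\le\|A\|_2\|B\|_F\|C\|_2$, applied with $A=C=M^\dagger$ and $B=R_k$. Finally, \eqref{eq:relative_residual_error_bound} is an algebraic rearrangement: divide both sides by $\|M^\dagger\|_F>0$, which is valid since $M\neq0$, and multiply and divide the right-hand side by $\|M\|_F$.

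The only point that needs care is that the identity genuinely depends on the initialization $X_0=\alpha M^T$. For an arbitrary $X$ one only gets $M^\dagger(MXM-M)M^\dagger=P_RXP_L-M^\dagger$, so the invariant-subspace property $X_k=P_RX_kP_L$ from \eqref{eq:Xk_invariant_form} must be invoked explicitly. Beyond that the argument is routine, and I do not expect any serious obstacle.
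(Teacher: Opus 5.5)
Your proof is correct and is essentially the paper's argument: the paper writes $R_k=ME_kM$ using the first Penrose equation, then uses the thin-SVD form to obtain $E_k=(M^\dagger M)E_k(MM^\dagger)$. That is the same projector cancellation as your $P_RX_kP_L=X_k$ (and as your explicit computation in $U_r,\Sigma_r,V_r$ coordinates). The norm bounds then follow exactly as you describe, via $\|ABC\|_F\le\|A\|_2\|B\|_F\|C\|_2$ and division by $\|M^\dagger\|_F>0$.
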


\begin{proof}
The first Penrose equation gives
\[
    R_k=M(X_k-M^\dagger)M=ME_kM.
\]
The thin-SVD form of \(E_k\) shows that
\[
    E_k=(M^\dagger M)E_k(MM^\dagger).
\]
Indeed, \(M^\dagger M=V_rV_r^T\), \(MM^\dagger=U_rU_r^T\), and \(E_k=V_r(D_k-\Sigma_r^{-1})U_r^T\). Therefore,
\begin{align*}
    M^\dagger R_kM^\dagger = M^\dagger ME_kMM^\dagger =(M^\dagger M)E_k(MM^\dagger)=E_k,
\end{align*}
which proves \eqref{eq:residual_error_exact}. Taking Frobenius norms gives \eqref{eq:residual_error_bound}, and multiplying and dividing by \(\|M\|_F\) and \(\|M^\dagger\|_F\) gives \eqref{eq:relative_residual_error_bound}.
\end{proof}

Thus, for the exact Newton--Schulz sequence initialized by \(\alpha M^T\), the normalized residual
\[
    \frac{\|MX_kM-M\|_F}{\|M\|_F}
\]
is a valid stopping quantity. In finite precision, it is also useful to report all four normalized Penrose residuals,
\begin{align*}
    r_1(X)&:=\frac{\|MXM-M\|_F}{\|M\|_F},
    & r_2(X)&:=\frac{\|XMX-X\|_F}{\|X\|_F},\\ r_3(X)&:=\frac{\|(MX)^T-MX\|_F}{\|MX\|_F}, 
    & r_4(X)&:=\frac{\|(XM)^T-XM\|_F}{\|XM\|_F},
\end{align*}
with the usual convention that a ratio is set to zero when both numerator and denominator vanish. These quantities reveal loss of the exact invariant structure caused by roundoff.

\subsection{Choice of the initial scaling}
The admissible interval \eqref{eq:alpha_interval} is the basic convergence requirement. Different optimization criteria lead to different distinguished choices of \(\alpha\).

\begin{proposition}[Scaling criteria] \label{prop:alpha_choices}
Let \(M\neq0\) have rank \(r\) and positive singular values \(\sigma_1\geq\cdots\geq\sigma_r>0\).
\begin{enumerate}
    \item If \(\rho\geq\|M\|_2^2=\sigma_1^2\) and \(0<\beta<2\), then $\displaystyle \alpha=\beta / \rho$ satisfies \(0<\alpha<2/\sigma_1^2\). A simple conservative choice is \(\beta=1\).

    \item The unconstrained minimizer of
    \[
        \|\alpha M^T-M^\dagger\|_F
    \]
    over \(\alpha\in\mathbb{R}\) is
    \begin{equation} \label{eq:alpha_F}
        \alpha_F=\frac{r}{\sum_{i=1}^r\sigma_i^2} =\frac{r}{\|M\|_F^2}.
    \end{equation}
    This value is an approximation-optimal scalar, but it need not satisfy the convergence interval \eqref{eq:alpha_interval}.

    \item The minimizer over \(\alpha>0\) of the worst initial projector residual
    \[
        \max_{1\leq i\leq r}|1-\alpha\sigma_i^2|
    \]
    is
    \begin{equation} \label{eq:alpha_minimax}
        \alpha_{\mathrm{mm}} =\frac{2}{\sigma_1^2+\sigma_r^2}.
    \end{equation}
    It satisfies the convergence interval and yields
    \[
        \max_i|1-\alpha_{\mathrm{mm}}\sigma_i^2| =\frac{\sigma_1^2-\sigma_r^2}{\sigma_1^2+\sigma_r^2}.
    \]
\end{enumerate}
\end{proposition}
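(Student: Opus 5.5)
All three items reduce, via the thin SVD \(M=U_r\Sigma_rV_r^T\) from the proof of Theorem~\ref{theorem1}, to scalar problems in the squared singular values \(s_i:=\sigma_i^2\). This works because \(\alpha M^T-M^\dagger=V_r\operatorname{diag}(\alpha\sigma_i-\sigma_i^{-1})U_r^T\), and the initial projector residuals are governed by \(\theta_i=1-\alpha s_i\).

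For item 1, I would chain the inequalities directly. Since \(\beta>0\) and \(\rho\ge\sigma_1^2>0\), we get \(\alpha=\beta/\rho>0\). Also \(\alpha\le\beta/\sigma_1^2<2/\sigma_1^2\), so \eqref{eq:alpha_interval} holds.

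For item 2, I would use the orthonormality of the columns of \(U_r,V_r\) to write
\[
f(\alpha):=\|\alpha M^T-M^\dagger\|_F^2=\sum_{i=1}^r(\alpha\sigma_i-\sigma_i^{-1})^2=\alpha^2\sum_i\sigma_i^2-2\alpha r+\sum_i\sigma_i^{-2}.
\]
This is a strictly convex quadratic in \(\alpha\), because \(\sum_i\sigma_i^2=\|M\|_F^2>0\). Setting \(f'(\alpha)=0\) gives \(\alpha_F=r/\|M\|_F^2\). To justify the remark that \(\alpha_F\) need not satisfy \eqref{eq:alpha_interval}, I would exhibit an example. If \(\sigma_1\) is large and the remaining \(r-1\) singular values are tiny, then \(\alpha_F\sigma_1^2\approx r\), which exceeds \(2\) once \(r\ge3\).

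For item 3, I would set \(g(\alpha)=\max_i|1-\alpha s_i|\). Because \(1-\alpha s\) is monotone in \(s\), the maximum is attained at an endpoint, so
\[
g(\alpha)=\max\{|1-\alpha s_1|,|1-\alpha s_r|\}.
\]
For \(\alpha>0\), the function \(1-\alpha s_r\) decreases from \(1\) and \(\alpha s_1-1\) increases. Hence \(g(\alpha)=\max\{1-\alpha s_r,\alpha s_1-1\}\) whenever this is nonnegative, which holds for all relevant \(\alpha\) since \(1-\alpha s_r\ge\alpha s_1-1\) exactly when \(\alpha\le 2/(s_1+s_r)\). The maximum of a decreasing and an increasing function is minimized at their crossing, \(1-\alpha s_r=\alpha s_1-1\), which gives \(\alpha_{\mathrm{mm}}=2/(s_1+s_r)\). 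Substituting yields the value \((s_1-s_r)/(s_1+s_r)\). The convergence interval follows from \(s_1+s_r>s_1\). The case \(r=1\) or \(s_1=s_r\) is covered as well, since then the minimum value is \(0\).

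The main obstacle is the careful handling of absolute values in item 3. I would settle it by the case split \(\alpha\le\alpha_{\mathrm{mm}}\) versus \(\alpha\ge\alpha_{\mathrm{mm}}\). On each side, one of the two endpoint terms dominates and is monotone, so \(g\) decreases before \(\alpha_{\mathrm{mm}}\) and increases after it.
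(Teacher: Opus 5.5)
Your proof is correct and follows essentially the same route as the paper. It uses the same inequality chain for item 1 and the same thin-SVD reduction to a strictly convex scalar quadratic for item 2. For item 3 it uses the same endpoint reduction and equalization $1-\alpha\sigma_r^2=\alpha\sigma_1^2-1$, and your monotone-crossing argument is slightly tighter than the paper's local perturbation argument. Your example with one dominant $\sigma_1$ and $r\geq 3$ also supplies a justification, omitted in the paper, that $\alpha_F$ may violate \eqref{eq:alpha_interval}.

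One small tidy-up: the identity $\max_i|1-\alpha\sigma_i^2|=\max\{1-\alpha\sigma_r^2,\alpha\sigma_1^2-1\}$ holds for every $\alpha>0$. This follows because $|x|=\max\{x,-x\}$, the two discarded terms are dominated, and the two remaining terms sum to $\alpha(\sigma_1^2-\sigma_r^2)\geq 0$. So your hedged justification of nonnegativity is unnecessary.
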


\begin{proof}
The first statement follows from
\[
    0<\frac{\beta}{\rho} \leq\frac{\beta}{\sigma_1^2} <\frac{2}{\sigma_1^2}.
\]

For the second statement, use a thin SVD:
\begin{align*}
    \|\alpha M^T - M^\dagger\|_F^2 =\left\|V_r (\alpha\Sigma_r - Sigma_r^{-1})U_r^T\right\|_F^2 =\sum_{i=1}^r\left(\alpha\sigma_i-\frac1{\sigma_i}\right)^2.
\end{align*}
Differentiating with respect to \(\alpha\) gives
\[
    2\sum_{i=1}^r\sigma_i \left(\alpha\sigma_i-\frac1{\sigma_i}\right) =2\left(\alpha\sum_{i=1}^r\sigma_i^2-r\right).
\]
The second derivative is \(2\sum_i\sigma_i^2>0\), so the unique minimizer is \eqref{eq:alpha_F}.

For the third statement, set \(a=\sigma_r^2\) and \(b=\sigma_1^2\). Since \(t\mapsto1-\alpha t\) is affine, the maximum of \(|1-\alpha t|\) for \(t\in[a,b]\) is attained at an endpoint. Hence the objective is
\[
    \max\{|1-\alpha a|,|1-\alpha b|\}.
\]
At the minimizer, the two endpoint errors have equal magnitude and opposite signs; otherwise \(\alpha\) can be moved slightly to decrease the larger one. Thus
\[
    1-\alpha a=-(1-\alpha b),
\]
which gives \(\alpha=2/(a+b)\). Substituting this value gives the stated residual factor. Since \(a>0\),
\[
    0<\frac{2}{a+b}<\frac{2}{b}=\frac{2}{\sigma_1^2},
\]
so the convergence condition holds.
\end{proof}

For a rectangular matrix, \(\|M\|_2^2\) should be estimated through the largest eigenvalue of \(M^TM\) or \(MM^T\), not through powers of \(M\). Power iteration or Lanczos applied to the smaller of these two positive semidefinite matrices gives a practical estimate. A safety factor should be used if the estimate is not a certified upper bound.

\subsection{Embedded split-quaternion iteration}
We now apply the real iteration to \(M=A^\sigma\).

\begin{algorithm}[H]
\caption{Embedded Newton--Schulz method for a split-quaternion Moore--Penrose inverse.}
\label{alg:newton_split}
\begin{algorithmic}[1]
\STATE \textbf{Input:} \(A\in\mathbb{Q}_s^{m\times n}\), tolerance \(\varepsilon>0\), maximum iterations \(K_{\max}\), and \(\beta\in(0,2)\).
\STATE \textbf{Output:} \(\widehat X\approx A^\dagger\in\mathbb{Q}_s^{n\times m}\).
\STATE Form \(M=A^\sigma\in\mathbb{R}^{2m\times2n}\).
\IF{\(M=0\)}
    \RETURN \(0\).
\ENDIF
\STATE Choose a conservative \(\rho\geq\|M\|_2^2\), and set \(\alpha=\beta/\rho\).
\STATE Set \(X_0=\alpha M^T\).
\FOR{\(k=0,1,\ldots,K_{\max}-1\)}
    \IF{\(m\leq n\)}
        \STATE \(X_{k+1}=X_k(2I_{2m}-MX_k)\).
    \ELSE
        \STATE \(X_{k+1}=(2I_{2n}-X_kM)X_k\).
    \ENDIF
    \STATE \(r_{k+1}=\|MX_{k+1}M-M\|_F/\|M\|_F\).
    \IF{\(r_{k+1}\leq\varepsilon\)}
        \STATE \textbf{break}.
    \ENDIF
\ENDFOR
\STATE Set \(\widehat X=\sigma^{-1}(X_{k+1})\).
\RETURN \(\widehat X\).
\end{algorithmic}
\end{algorithm}

\begin{theorem}[Convergence in split-quaternion coordinates]
\label{thm:embedded_split_convergence}
Let \(A\in\mathbb{Q}_s^{m\times n}\), set \(M=A^\sigma\), and suppose \(M\neq0\). Let \(X_k\) be generated by the embedded iteration with
\[
    0<\alpha<\frac{2}{\|M\|_2^2}, \qquad \widehat X_k:=\sigma^{-1}(X_k).
\]
Then \(\widehat X_k\to A^\dagger\). If
\[
    \widehat E_k:=\widehat X_k-A^\dagger,
\]
then
\begin{equation} \label{eq:split_quadratic_bound}
    \|\widehat E_{k+1}\|_F \leq\sqrt2\,\|A\|_{\sigma,2}\|\widehat E_k\|_F^2.
\end{equation}
\end{theorem}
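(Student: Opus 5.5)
The plan is to reduce everything to the real results of Section~\ref{Sec:pro}, applied to \(M=A^\sigma\), and then move the conclusions back to split-quaternion coordinates. Corollary~\ref{cor_2} handles convergence, and Lemma~\ref{lem:norm_relation} handles the constants.

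First, since \(M\neq0\) and \(0<\alpha<2/\|M\|_2^2=2/\sigma_1^2\), Theorem~\ref{theorem1} applies directly to the real sequence \(X_k\) with \(X_0=\alpha M^T\). It gives \(X_k\to M^\dagger\), and the convergence holds in every norm, in particular the Frobenius norm. By Theorem~\ref{lem_mp_em}, \(M^\dagger=(A^\sigma)^\dagger=(A^\dagger)^\sigma\). Corollary~\ref{cor_2} then yields \(\widehat X_k=\sigma^{-1}(X_k)\to A^\dagger\). It also gives the exact identity
\[
\|\widehat E_k\|_F=\tfrac{1}{\sqrt2}\|E_k\|_F,\qquad E_k:=X_k-M^\dagger,
\]
since \(\widehat E_k=\sigma^{-1}(E_k)\) by linearity of \(\sigma^{-1}\).

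Second, for the quadratic bound, I would invoke Corollary~\ref{cor:NS_error_recurrence}, which gives \(\|E_{k+1}\|_F\le\|M\|_2\|E_k\|_F^2\). Substituting \(\|E_j\|_F=\sqrt2\,\|\widehat E_j\|_F\) for \(j=k,k+1\) gives
\[
\sqrt2\,\|\widehat E_{k+1}\|_F\le \|M\|_2\cdot 2\|\widehat E_k\|_F^2 .
\]
Dividing by \(\sqrt2\) and using \(\|M\|_2=\|A^\sigma\|_2=\|A\|_{\sigma,2}\) yields exactly \eqref{eq:split_quadratic_bound}.

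There is no real obstacle here. The one point needing care is the bookkeeping of the \(\sqrt2\) factors: Frobenius norms scale by \(\sqrt2\) under \(\sigma\), whereas the spectral factor \(\|M\|_2\) is kept in its real form through the definition of \(\|A\|_{\sigma,2}\). This mismatch between one norm appearing linearly and one appearing quadratically is what leaves a single surviving \(\sqrt2\). I would also check that the recurrence \(E_{k+1}=-E_kME_k\) can be transported natively as \(\widehat E_{k+1}=-\widehat E_kA\widehat E_k\) via \eqref{thir_pro}. This is not needed for the bound, but it confirms the statement's consistency.
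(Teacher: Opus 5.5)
Your proposal is correct and follows essentially the same route as the paper. It obtains convergence from Theorem~\ref{theorem1}, Theorem~\ref{lem_mp_em} and Corollary~\ref{cor_2}, and then the quadratic estimate by transporting \eqref{eq:quadratic_F_bound} through \(\|E_k\|_F=\sqrt2\,\|\widehat E_k\|_F\), with the same bookkeeping of the \(\sqrt2\) factors.
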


\begin{proof}
Theorem~\ref{theorem1} gives \(X_k\to M^\dagger\), while Theorem~\ref{lem_mp_em} gives \(M^\dagger=(A^\dagger)^\sigma\). Corollary~\ref{cor_2} therefore implies \(\widehat X_k\to A^\dagger\).

For the quantitative bound, let \(E_k=X_k-M^\dagger\). By Lemma~\ref{lem:norm_relation},
\[
    \|E_k\|_F=\sqrt2\|\widehat E_k\|_F.
\]
Using \eqref{eq:quadratic_F_bound},
\begin{align*}
    \|\widehat E_{k+1}\|_F
    &=\frac1{\sqrt2}\|E_{k+1}\|_F\\
    &\leq\frac1{\sqrt2}\|M\|_2\|E_k\|_F^2\\
    &=\sqrt2\,\|M\|_2\|\widehat E_k\|_F^2.
\end{align*}
Since \(\|M\|_2=\|A\|_{\sigma,2}\), this proves \eqref{eq:split_quadratic_bound}.
\end{proof}

\subsection{Native split-quaternion iteration}
The native iteration is
\begin{equation} \label{eq:NS_native}
    \widehat X_{k+1} =\widehat X_k(2I_m-A\widehat X_k), \qquad \widehat X_0=\alpha A^H,
\end{equation}
where \(\widehat X_k\in\mathbb{Q}_s^{n\times m}\). It is not necessary to develop a separate inverse-based convergence argument. The native sequence is exactly the real Newton--Schulz sequence transported through \(\sigma^{-1}\).

\begin{theorem}[Equivalence and convergence of the native iteration] \label{thm:native_equivalence}
Let \(A\in\mathbb{Q}_s^{m\times n}\), let \(M=A^\sigma\), and assume
\[
    0<\alpha<\frac{2}{\|M\|_2^2}.
\]
Let \(\widehat X_k\) be generated by \eqref{eq:NS_native}, and define \(X_k=(\widehat X_k)^\sigma\). Then
\[
    X_0=\alpha M^T, \qquad X_{k+1}=X_k(2I_{2m}-MX_k).
\]
Consequently,
\[
    \widehat X_k\longrightarrow A^\dagger.
\]
Furthermore, with \(\widehat E_k=\widehat X_k-A^\dagger\),
\begin{equation}\label{eq:native_error_recurrence}
    \widehat E_{k+1}=-\widehat E_kA\widehat E_k,
\end{equation}
and the bound \eqref{eq:split_quadratic_bound} holds.
\end{theorem}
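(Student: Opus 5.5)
The plan is to transport the native recursion \eqref{eq:NS_native} through \(\sigma\) and then invoke the real results already proved. I would first check the initialization. By Lemma~\ref{lem:tau_transpose} together with \eqref{eq:sigma_scale},
\[
X_0=(\alpha A^H)^\sigma=\alpha (A^H)^\sigma=\alpha (A^\sigma)^T=\alpha M^T.
\]
For the update, I would apply \(\sigma\) to both sides of \eqref{eq:NS_native}. Using \eqref{eq:sigma_add}, \eqref{eq:sigma_scale} and \eqref{thir_pro}, together with \((I_m)^\sigma=I_{2m}\) (immediate from \eqref{eq:sigma_matrix}, since \(I_m\) has only a real part), this gives
\[
X_{k+1}=(\widehat X_k)^\sigma\bigl(2I_{2m}-A^\sigma(\widehat X_k)^\sigma\bigr)=X_k(2I_{2m}-MX_k).
\]
An induction on \(k\) then shows that \(\{X_k\}\) is exactly the real sequence \eqref{eq:NS_real} with \(X_0=\alpha M^T\).

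Next comes convergence. Since \(0<\alpha<2/\|M\|_2^2=2/\sigma_1^2\), Theorem~\ref{theorem1} gives \(X_k\to M^\dagger\). If \(M=0\), then \(A=0\) and every iterate is zero, so there is nothing to prove. Theorem~\ref{lem_mp_em} identifies \(M^\dagger=(A^\dagger)^\sigma\). Because \(\sigma^{-1}(X_k)=\widehat X_k\) by bijectivity, Corollary~\ref{cor_2} then yields \(\widehat X_k\to A^\dagger\), with the norm identity \eqref{eq:transfer_norm}.

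For the error recurrence, I set \(E_k=X_k-M^\dagger\). Linearity of \(\sigma\) together with \eqref{eq:mp_commutes_sigma} gives \(E_k=(\widehat E_k)^\sigma\). Corollary~\ref{cor:NS_error_recurrence} supplies \(E_{k+1}=-E_kME_k\), which by \eqref{thir_pro} equals \((-\widehat E_kA\widehat E_k)^\sigma\). Injectivity of \(\sigma\) then gives \eqref{eq:native_error_recurrence}. Finally, the bound \eqref{eq:split_quadratic_bound} follows verbatim from the proof of Theorem~\ref{thm:embedded_split_convergence}: the sequences \(\widehat X_k\) there and here coincide, and that argument only used \eqref{eq:quadratic_F_bound} and Lemma~\ref{lem:norm_relation}.

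The only step requiring care, as opposed to bookkeeping, is making sure that \(\sigma\) intertwines every ingredient of the native iteration. In particular, the adjoint in \(\widehat X_0\) must be the \(i\)-conjugate transpose \(H\) and not the full conjugate transpose \(*\), because only \(H\) satisfies \((A^H)^\sigma=(A^\sigma)^T\). The identity block \(2I_m\) must also map to \(2I_{2m}\). Once these two points are checked, the rest is a direct transfer of the real results.
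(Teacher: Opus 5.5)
Your proposal is correct and follows essentially the same route as the paper. Both transport the native recursion through \(\sigma\) using \((A^H)^\sigma=(A^\sigma)^T\) and the product property, identify the represented iterates with the real Newton--Schulz sequence by induction, and then invoke Theorem~\ref{theorem1}, Corollary~\ref{cor_2}, Corollary~\ref{cor:NS_error_recurrence}, and injectivity of \(\sigma\); your explicit checks that \((I_m)^\sigma=I_{2m}\) and that the case \(M=0\) is trivial are details the paper leaves implicit.
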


\begin{proof}
By Lemma~\ref{lem:tau_transpose},
\[
    X_0=(\alpha A^H)^\sigma =\alpha(A^H)^\sigma =\alpha M^T.
\]
Assume \(X_k=(\widehat X_k)^\sigma\). Applying \(\sigma\) to \eqref{eq:NS_native} and using the product property,
\begin{align*}
    X_{k+1}
    &=(\widehat X_{k+1})^\sigma\\
    &=(\widehat X_k)^\sigma
    \left(2I_{2m}-A^\sigma(\widehat X_k)^\sigma\right)\\
    &=X_k(2I_{2m}-MX_k).
\end{align*}
Thus, by induction, the represented native iterates coincide exactly with the real Newton--Schulz iterates. Theorem~\ref{theorem1} and Corollary~\ref{cor_2} give convergence to \(A^\dagger\).

The real error satisfies \(E_{k+1}=-E_kME_k\). Since
\[
    E_k=(\widehat E_k)^\sigma, \qquad M=A^\sigma,
\]
we obtain
\[
    (\widehat E_{k+1})^\sigma = (-\widehat E_kA\widehat E_k)^\sigma.
\]
Injectivity of \(\sigma\) proves \eqref{eq:native_error_recurrence}. The norm bound is the one established in Theorem~\ref{thm:embedded_split_convergence}.
\end{proof}

\begin{algorithm}[H]
\caption{Native Newton--Schulz method for a split-quaternion Moore--Penrose inverse.}
\label{alg:native_newton}
\begin{algorithmic}[1]
\STATE \textbf{Input:} \(A\in\mathbb{Q}_s^{m\times n}\), tolerance \(\varepsilon>0\), maximum iterations \(K_{\max}\), and \(\beta\in(0,2)\).
\STATE \textbf{Output:} \(\widehat X\approx A^\dagger\in\mathbb{Q}_s^{n\times m}\).
\IF{\(A=0\)}
    \RETURN \(0\).
\ENDIF
\STATE Choose a conservative \(\rho\geq\|A\|_{\sigma,2}^2\), and set \(\alpha=\beta/\rho\).
\STATE Set \(\widehat X_0=\alpha A^H\).
\FOR{\(k=0,1,\ldots,K_{\max}-1\)}
    \IF{\(m\leq n\)}
        \STATE \(\widehat X_{k+1}=\widehat X_k(2I_m-A\widehat X_k)\).
    \ELSE
        \STATE \(\widehat X_{k+1}=(2I_n-\widehat X_kA)\widehat X_k\).
    \ENDIF
    \STATE \(r_{k+1}=\|A\widehat X_{k+1}A-A\|_F/\|A\|_F\).
    \IF{\(r_{k+1}\leq\varepsilon\)}
        \STATE \textbf{break}.
    \ENDIF
\ENDFOR
\RETURN \(\widehat X_{k+1}\).
\end{algorithmic}
\end{algorithm}

The \(\sigma\)-representation and a native four-component storage both contain \(4mn\) real scalar entries. Thus, there is no intrinsic raw-storage factor between them. A native implementation may nevertheless avoid explicit block assembly and can exploit componentwise structure, while a real implementation can use highly optimized real matrix multiplication. Both approaches have the same dense asymptotic order; the faster implementation depends on dimensions, arithmetic kernels, data layout, and hardware.

\subsection{A residual-checked extrapolation heuristic}
A possible postprocessing candidate is
\begin{equation} \label{eq:extrapolation_heuristic}
    \widetilde X_{k+1} =X_{k+1}+\omega(X_{k+1}-X_k) =(1+\omega)X_{k+1}-\omega X_k.
\end{equation}
This construction should be regarded as a heuristic, not as a quadratically convergent Newton--Schulz method. Indeed, if \(E_k=X_k-M^\dagger\), then exactly
\begin{equation} \label{eq:extrapolation_error_exact}
    \widetilde X_{k+1}-M^\dagger =(1+\omega)E_{k+1}-\omega E_k.
\end{equation}
Since \(E_{k+1}=\mathcal{O}(\|E_k\|^2)\), any fixed \(\omega\neq0\) gives
\[
    \widetilde X_{k+1}-M^\dagger = -\omega E_k+\mathcal{O}(\|E_k\|^2).
\]
Therefore, a fixed nonzero extrapolation parameter does not preserve quadratic convergence and may worsen the approximation close to the limit. If \eqref{eq:extrapolation_heuristic} is tested numerically, a safe interpretation is to use it only as a postprocessed candidate and accept it only when its true residual is smaller than that of \(X_{k+1}\). Feeding the extrapolated point back into the iteration defines a different two-step method and requires a separate stability analysis.

\subsection{A Souriau-inspired polynomial initialization for Newton--Schulz} \label{subsec:souriau-poly-init}

The convergence of Newton--Schulz depends strongly on the initial point.  A scaled transpose is simple and safe, but it applies the same scalar correction to all singular directions.  We now construct a low-degree polynomial initialization that partially corrects the spectrum before the Newton--Schulz refinement starts.  The idea is motivated by Souriau's finite recursion for the inverse of a square matrix~\cite{Souriau1948, Souriau2007}.

\paragraph{Motivation from Souriau's recursion.}
Let $B\in\mathbb{R}^{N\times N}$.  Define
\[
    P_1=I_N, \qquad k_1=-\operatorname{tr}(B),
\]
and, for $r=2,\ldots,N$,
\[
    P_r=P_{r-1}B+k_{r-1}I_N, \qquad k_r=-\frac{1}{r}\operatorname{tr}(P_rB).
\]
Souriau's recursion \cite{Souriau1948,Souriau2007} gives
\begin{equation} \label{eq:souriau-identity}
    P_NB+k_NI_N=0.
\end{equation}
Indeed, the induction gives
\[
    P_r=B^{r-1}+k_1B^{r-2}+\cdots+k_{r-1}I_N.
\]
The trace formula for $k_r$ is then Newton's identity for the coefficients of
\[
    \chi_B(t)=t^N+k_1t^{N-1}+\cdots+k_N.
\]
Equation \eqref{eq:souriau-identity} follows from the Cayley--Hamilton theorem.  Since $k_N=(-1)^N\det(B)$, a nonsingular matrix satisfies
\begin{equation} \label{eq:souriau-inverse}
    B^{-1}=-\frac{1}{k_N}P_N.
\end{equation}
Thus, an exact inverse can be written as a polynomial in the matrix.

We do not apply this high-degree formula directly to a rectangular matrix.  We keep a simpler idea: a low-degree polynomial can approximate the inverse of a square Gram factor, while Newton--Schulz performs the final refinement.

\paragraph{The LS--Gram polynomial.}
Let $M=A^\sigma\in\mathbb{R}^{p\times q}$
be the real representation of a split-quaternion matrix.  Assume first that $p\ge q$ and that $M$ has full column rank.  Then
\[
    M^\dagger=(M^TM)^{-1}M^T.
\]
Choose
\[
    \rho\geq\|M\|_2^2
\]
and define
\begin{equation}\label{eq:normalized-column-gram}
    B=\frac{M^TM}{\rho}.
\end{equation}
The matrix $B$ is symmetric positive definite and its spectrum is contained in $(0,1]$.  Since
\[
    M^\dagger=\frac{1}{\rho}B^{-1}M^T,
\]
we approximate $B^{-1}$ by
\[
    p_d(t)=\sum_{\ell=0}^{d}c_\ell t^\ell
\]
and set
\begin{equation}\label{eq:LS-Gram-column-init}
    X_0^{(d)} = \frac{1}{\rho}p_d(B)M^T.
\end{equation}
The coefficients are selected from
\begin{equation}\label{eq:LS-Gram-problem}
    \min_{c_0,\ldots,c_d} \left\|I_q-Bp_d(B)\right\|_F^2.
\end{equation}
This is a linear least-squares problem.  More precisely, if
\[
    \mathcal{K}_d = 
    \begin{bmatrix}
    \operatorname{vec}(B)&
    \operatorname{vec}(B^2)&
    \cdots&
    \operatorname{vec}(B^{d+1})
    \end{bmatrix},
\]
then \eqref{eq:LS-Gram-problem} is equivalent to
\begin{equation}\label{eq:vectorized-LS-Gram}
    \min_{c\in\mathbb{R}^{d+1}} \left\| \operatorname{vec}(I_q)-\mathcal{K}_dc \right\|_2^2.
\end{equation}
A QR or SVD least-squares solver can therefore be used.  If the coefficient vector is not unique, the fitted matrix $Bp_d(B)$ is still unique, because it is the orthogonal projection of $I_q$ onto
\[
    \operatorname{span}\{B,B^2,\ldots,B^{d+1}\}.
\]
Since $B$ is nonsingular, the fitted matrices $p_d(B)$ and $X_0^{(d)}$ are also unique.

The spectral meaning of the fit is simple.

\begin{proposition}[Spectral form of the LS--Gram objective]\label{prop:LS-Gram-spectral}
Let $\lambda_1,\lambda_2,\ldots,\lambda_q$ be the eigenvalues of $B$, counted with multiplicity.  Then
\begin{equation} \label{eq:spectral-LS-Gram}
    \left\|I_q-Bp_d(B)\right\|_F^2 = \sum_{i=1}^{q} \left(1-\lambda_i p_d(\lambda_i)\right)^2.
\end{equation}
For $d=0$, the minimizer is
\begin{equation}\label{eq:degree-zero-LS-scaling}
    c_0 = \frac{\operatorname{tr}(B)}{\operatorname{tr}(B^2)}, \qquad X_0^{(0)}=\frac{c_0}{\rho}M^T.
\end{equation}
\end{proposition}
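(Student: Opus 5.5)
Since $B=M^TM/\rho$ is real symmetric, I will diagonalize it orthogonally as $B=Q\Lambda Q^T$ with $Q$ orthogonal and $\Lambda=\operatorname{diag}(\lambda_1,\ldots,\lambda_q)$, the eigenvalues counted with multiplicity. (The same spectral decomposition can also be read off a thin SVD of $M$, as in the proof of Theorem~\ref{theorem1}.) Any polynomial in $B$ is then diagonalized by the same $Q$: $B^\ell=Q\Lambda^\ell Q^T$ for every $\ell$, hence $p_d(B)=Q\,p_d(\Lambda)Q^T$. This gives
\[
    I_q-Bp_d(B)=Q\bigl(I_q-\Lambda p_d(\Lambda)\bigr)Q^T
    =Q\operatorname{diag}\bigl(1-\lambda_ip_d(\lambda_i)\bigr)Q^T.
\]
The Frobenius norm is invariant under multiplication by orthogonal matrices on either side. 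It follows that $\|I_q-Bp_d(B)\|_F^2$ equals the sum of squares of the diagonal entries, which is \eqref{eq:spectral-LS-Gram}. This identity holds for every coefficient vector, not only the minimizer.

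For $d=0$ I will take $p_0(t)=c_0$, so the objective is the scalar quadratic
\[
    f(c_0)=\sum_{i=1}^q(1-c_0\lambda_i)^2 .
\]
Setting $f'(c_0)=-2\sum_i\lambda_i(1-c_0\lambda_i)=0$ gives $c_0\sum_i\lambda_i^2=\sum_i\lambda_i$. Since $\sum_i\lambda_i=\operatorname{tr}(B)$ and $\sum_i\lambda_i^2=\operatorname{tr}(B^2)$, this is $c_0=\operatorname{tr}(B)/\operatorname{tr}(B^2)$. The second derivative is $2\operatorname{tr}(B^2)=2\|B\|_F^2>0$ because $B\neq0$ (indeed $B$ is positive definite), so this critical point is the unique global minimizer. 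Substituting $p_0(B)=c_0I_q$ into \eqref{eq:LS-Gram-column-init} gives $X_0^{(0)}=(c_0/\rho)M^T$.

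There is no real obstacle in this argument. The only point needing care is to justify that a polynomial in $B$ shares the eigenvectors of $B$, which follows from the orthogonal diagonalization. As a sanity check, rewriting $c_0/\rho$ in terms of the singular values of $M$ gives $\sum\sigma_i^2/\sum\sigma_i^4$, which relates $X_0^{(0)}$ to the scalings in Proposition~\ref{prop:alpha_choices}. This check is not needed for the proof.
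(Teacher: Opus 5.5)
Your proposal is correct and matches the paper's proof: both diagonalize $B$ orthogonally, use orthogonal invariance of the Frobenius norm to get the spectral identity, and then minimize a one-variable quadratic for $d=0$. The only cosmetic difference is that the paper writes the $d=0$ objective as $q-2c_0\operatorname{tr}(B)+c_0^2\operatorname{tr}(B^2)$ rather than as a sum over eigenvalues, which is the same computation; your explicit check of the second derivative is a small addition that the paper leaves implicit.
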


\begin{proof}
Let $B=V\Lambda V^T$ be an eigendecomposition.  Since
\[
    I_q-Bp_d(B) = V\bigl(I_q-\Lambda p_d(\Lambda)\bigr)V^T,
\]
orthogonal invariance of the Frobenius norm gives \eqref{eq:spectral-LS-Gram}.  When $d=0$, the objective becomes
\[
    \|I_q-c_0B\|_F^2 =q-2c_0\operatorname{tr}(B)+c_0^2\operatorname{tr}(B^2).
\]
Differentiating with respect to $c_0$ gives \eqref{eq:degree-zero-LS-scaling}.
\end{proof}

Thus the LS polynomial is adapted to the actual eigenvalues of the normalized Gram matrix.  Degree zero already gives an optimized scalar multiple of $M^T$.  It is not necessarily the same as the conservative scaling in Proposition~\ref{prop:alpha_choices}.

\paragraph{Convergence of the polynomially initialized iteration.}
The Frobenius objective in \eqref{eq:LS-Gram-problem} is useful for fitting the polynomial, but Newton--Schulz convergence is controlled by the largest spectral residual.

\begin{theorem}[Polynomially initialized Newton--Schulz]
\label{thm:polynomial-NS}
Let $M\in\mathbb{R}^{p\times q}$ be nonzero.

\begin{enumerate}[label=\textup{(\roman*)}]
\item Suppose that $M$ has full column rank.  Let $B$ and $X_0^{(d)}$ be defined by \eqref{eq:normalized-column-gram} and \eqref{eq:LS-Gram-column-init}, and set
\begin{equation} \label{eq:column-polynomial-error}
    \eta_d = \max_{\lambda\in\sigma(B)} \left|1-\lambda p_d(\lambda)\right|.
\end{equation}

\item Suppose that $M$ has full row rank.  Define
\begin{equation} \label{eq:normalized-row-gram}
    C=\frac{MM^T}{\rho}, \qquad X_0^{(d)} = M^T\frac{1}{\rho}p_d(C),
\end{equation}
and set
\begin{equation} \label{eq:row-polynomial-error}
    \eta_d = \max_{\lambda\in\sigma(C)} \left|1-\lambda p_d(\lambda)\right|.
\end{equation}
\end{enumerate}

Starting from $X_0^{(d)}$, consider
\begin{equation}\label{eq:polynomial-NS-iteration}
    X_{k+1}=2X_k-X_kMX_k.
\end{equation}
If $\eta_d<1$, then $X_k\to M^\dagger$.  In the full-column-rank case,
\begin{equation} \label{eq:column-polynomial-residual-rate}
    I_q-X_kM = \left(I_q-X_0^{(d)}M\right)^{2^k}, \qquad \|I_q-X_kM\|_2=\eta_d^{2^k}.
\end{equation}
In the full-row-rank case,
\begin{equation}
\label{eq:row-polynomial-residual-rate}
I_p-MX_k
=
\left(I_p-MX_0^{(d)}\right)^{2^k},
\qquad
\|I_p-MX_k\|_2=\eta_d^{2^k}.
\end{equation}
\end{theorem}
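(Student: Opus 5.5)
The proof follows the same template as Theorem~\ref{theorem1}: diagonalize through a thin SVD and reduce to independent scalar Newton--Schulz recursions. I treat the full-column-rank case (i); case (ii) is the mirror image with $C$ in place of $B$, left and right factors swapped, and $MX_k$ in place of $X_kM$.

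\textbf{Step 1: SVD form of the initial point.} Since $M$ has full column rank $r=q$, write the thin SVD $M=U_q\Sigma_qV^T$ with $V\in\mathbb{R}^{q\times q}$ orthogonal. Then
\[
B=V(\Sigma_q^2/\rho)V^T,\qquad \lambda_i=\sigma_i^2/\rho,\qquad p_d(B)=Vp_d(\Sigma_q^2/\rho)V^T.
\]
Substituting into \eqref{eq:LS-Gram-column-init} gives
\[
X_0^{(d)}=VD_0U_q^T,\qquad d_{i,0}=\sigma_i\,p_d(\lambda_i)/\rho .
\]
This has exactly the invariant form \eqref{eq:Xk_invariant_form}, with a diagonal middle factor.

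\textbf{Step 2: propagate the iteration.} The induction in the proof of Theorem~\ref{theorem1} did not use the specific value of $D_0$, only that it is diagonal. So it applies verbatim: $X_k=VD_kU_q^T$ and $e_{i,k}:=1-\sigma_id_{i,k}$ satisfies $e_{i,k+1}=e_{i,k}^2$. The starting value is
\[
e_{i,0}=1-\sigma_i^2p_d(\lambda_i)/\rho=1-\lambda_ip_d(\lambda_i).
\]
Hence $e_{i,k}=(1-\lambda_ip_d(\lambda_i))^{2^k}$.

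\textbf{Step 3: the residual identities.} Full column rank gives $VV^T=I_q$, so
\[
I_q-X_kM=V\operatorname{diag}(e_{i,k})V^T .
\]
Taking $k=0$ and raising to the power $2^k$ yields $(I_q-X_0^{(d)}M)^{2^k}$, which is the first identity in \eqref{eq:column-polynomial-residual-rate}. An alternative direct route is to note that $I-X_{k+1}M=(I-X_kM)^2$, since $X_{k+1}M=2X_kM-(X_kM)^2$; this holds for any start and needs no SVD. Because the matrix is symmetric, its spectral norm is $\max_i|e_{i,k}|=\eta_d^{2^k}$. Here the maximum over $\sigma(B)$ in \eqref{eq:column-polynomial-error} coincides with the maximum over the $\lambda_i$.

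\textbf{Step 4: convergence.} If $\eta_d<1$, each $e_{i,k}\to0$, so $d_{i,k}\to1/\sigma_i$ and $X_k\to V\Sigma_q^{-1}U_q^T=M^\dagger$.

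\textbf{Row case (ii).} Write $M=U\Sigma_pV_p^T$ with $U$ square orthogonal. Then $p_d(C)=Up_d(\Sigma_p^2/\rho)U^T$ and $X_0^{(d)}=V_p\,\Sigma_p\,p_d(\Sigma_p^2/\rho)\,U^T/\rho$, which is again of the invariant form. The residual to track is $I_p-MX_k=U\operatorname{diag}(e_{i,k})U^T$, using $UU^T=I_p$.

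\textbf{Main obstacle.} The only subtle point is that Theorem~\ref{theorem1} was stated for $X_0=\alpha M^T$. I must check that its inductive argument used nothing beyond the diagonal form of $D_0$, and that the polynomial functional calculus commutes with the SVD factors. This second fact holds because $B$ (respectively $C$) is diagonalized by the right (respectively left) singular vectors.
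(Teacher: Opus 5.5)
Your proposal is correct and follows essentially the same route as the paper. Both arguments use a thin SVD and write the initial point in the invariant form $V D_0 U^T$ with $e_{i,0}=1-\lambda_i p_d(\lambda_i)$; the scalar squaring $e_{i,k+1}=e_{i,k}^2$ then gives the residual identity and its spectral norm. Your extra remark that $I_q-X_{k+1}M=(I_q-X_kM)^2$ holds for any starting point is a valid SVD-free shortcut to the first identity, but the paper does not need it.
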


\begin{proof}
We give the full-column-rank proof.  The row case is identical after exchanging $M^TM$ and $MM^T$.  Let
\[
    M=U\Sigma V^T, \qquad B=V\Lambda V^T, \qquad \lambda_i=\frac{\sigma_i^2}{\rho}.
\]
From \eqref{eq:LS-Gram-column-init},
\[
    X_0^{(d)} = V\operatorname{diag}\!\left( \frac{\lambda_i p_d(\lambda_i)}{\sigma_i} \right)U^T.
\]
The proof of Theorem~\ref{theorem1} shows that Newton--Schulz preserves this singular-vector form.  If $d_{i,k}$ denotes its coefficient in the $i$th singular direction and
\[
    e_{i,k}=1-\sigma_i d_{i,k},
\]
then
\[
    e_{i,k+1}=e_{i,k}^2, \qquad  e_{i,0}=1-\lambda_i p_d(\lambda_i).
\]
Hence
\[
    e_{i,k}=e_{i,0}^{2^k}.
\]
The condition $\eta_d<1$ gives $e_{i,k}\to0$ for every $i$, so $d_{i,k}\to\sigma_i^{-1}$ and $X_k\to M^\dagger$.  Moreover,
\[
    I_q-X_kM = V\operatorname{diag}(e_{i,k})V^T,
\]
which gives \eqref{eq:column-polynomial-residual-rate}.  The same argument with the left residual gives \eqref{eq:row-polynomial-residual-rate}.
\end{proof}

If \(0<\eta_d<1\), a residual below
\(\varepsilon\in(0,1)\) is obtained once
\begin{equation}\label{eq:polynomial-iteration-count}
    k \geq \max\left\{0, \left\lceil \log_2\!\left( \frac{\log(\varepsilon)}{\log(\eta_d)} \right) \right\rceil \right\}.
\end{equation}
The outer maximum accounts for the case \(\eta_d\leq\varepsilon\), in which the initial point already satisfies the prescribed tolerance. This also explains why a moderate improvement of the initial spectral residual can remove several Newton--Schulz steps.

\begin{remark}[Acceptance test and safe fallback]
\label{rem:LS-does-not-guarantee-NS}
Minimizing the Frobenius residual does not automatically give $\eta_d<1$.  For example, let
\[
B=\operatorname{diag}(1,\underbrace{10^{-2},\ldots,10^{-2}}_{1000\text{ times}}).
\]
For $d=0$, formula \eqref{eq:degree-zero-LS-scaling} gives $c_0=10$, and the residual at the eigenvalue $1$ is $1-c_0=-9$.  The degree-zero LS fit therefore lies outside the Newton--Schulz convergence region.

After fitting $p_d$, we consequently estimate
\[
\eta_d=\|I-Bp_d(B)\|_2
\]
(or its row analogue).  We accept the polynomial initialization only when $\eta_d<1-\delta$, where $\delta\in(0,1)$ is a small safety margin.  Otherwise, we use
\begin{equation}
\label{eq:LS-Gram-safe-fallback}
X_0^{\mathrm{safe}}=\frac{\beta}{\rho}M^T,
\qquad 0<\beta<2.
\end{equation}
Theorem~\ref{theorem1} shows that this fallback converges for matrices of arbitrary rank.  We also use it when full row or column rank has not been certified.
\end{remark}

\paragraph{Compatibility with the split-quaternion structure.}
The construction stays in the image of the real representation in exact arithmetic. Indeed,
\[
    M^TM=(A^HA)^\sigma, \qquad MM^T=(AA^H)^\sigma.
\]
Therefore, in the full-column-rank case,
\begin{equation} \label{eq:column-structure-poly}
    \frac{1}{\rho}p_d\!\left(\frac{M^TM}{\rho}\right)M^T = \left[ \frac{1}{\rho} p_d\!\left(\frac{A^HA}{\rho}\right)A^H \right]^\sigma,
\end{equation}
and an analogous identity holds in the full-row-rank case.  Moreover, if $X_k=Z_k^\sigma$, then
\[
    2X_k-X_kMX_k=(2Z_k-Z_kAZ_k)^\sigma.
\]
Thus, under the chosen representation, the polynomial initialization and all Newton--Schulz iterates correspond exactly to matrices over
\(\mathbb{Q}_s\).

\begin{remark}[Relation with Chebyshev inverse polynomials]
\label{rem:LS-versus-Chebyshev}
The LS--Gram polynomial minimizes a discrete error over the eigenvalues of $B$, as shown in \eqref{eq:spectral-LS-Gram}. A Chebyshev polynomial follows a different approach: it controls the largest error over a prescribed interval containing the spectrum.

Assume that
\[
    \sigma(B)\subseteq [a,1], \qquad 0<a<1.
\]
The goal is to construct a polynomial $p_d$ that approximates $t^{-1}$ on $[a,1]$. Equivalently, we want the residual
\[
    r_{d+1}(t)=1-tp_d(t)
\]
to be small on this interval.

The affine map
\[
    t\longmapsto \frac{1+a-2t}{1-a}
\]
maps $[a,1]$ onto $[-1,1]$. Since the Chebyshev polynomial $T_{d+1}$ of the first kind satisfies
\[
    \left|T_{d+1}(s)\right|\leq 1, \qquad s \in[-1,1],
\]
we define
\[
    r_{d+1}(t) = \frac{T_{d+1}\!\left(\frac{1+a-2t}{1-a}\right)}{T_{d+1}\!\left(\frac{1+a}{1-a}\right)}, \qquad p_d^{\mathrm{Ch}}(t) = \frac{1-r_{d+1}(t)}{t}.
\]
The normalization ensures that
\[
    r_{d+1}(0)=1.
\]
Hence $1-r_{d+1}(t)$ vanishes at $t=0$ and is divisible by $t$. Therefore, $p_d^{\mathrm{Ch}}$ is a polynomial of degree at most $d$.

Let
\[
    \vartheta = \frac{1-\sqrt{a}}{1+\sqrt{a}}.
\]
Then
\begin{equation} \label{eq:Chebyshev-inverse-bound}
    \max_{t\in[a,1]} \left|1-tp_d^{\mathrm{Ch}}(t)\right| =\max_{t\in[a,1]} \left|r_{d+1}(t)\right| \leq \frac{2\vartheta^{d+1}} {1+\vartheta^{2(d+1)}} \leq 2\vartheta^{d+1}.
\end{equation}
Thus, the approximation error decreases geometrically with the polynomial degree.

In the Gram setting, if
\[
    B=\frac{M^TM}{\rho}, \qquad \rho=\|M\|_2^2,
\]
then
\[
    \sigma(B) \subseteq \left[\kappa_2(M)^{-2},1\right].
\]

Taking
\[
    a=\kappa_2(M)^{-2}
\]
gives
\[
    \vartheta = \frac{\kappa_2(M)-1} {\kappa_2(M)+1}.
\]

The Chebyshev construction gives an explicit worst-case guarantee over the full spectral interval, but it requires a reliable lower bound on the spectrum. The LS--Gram polynomial instead adapts to the observed eigenvalues, but it does not automatically provide the same uniform guarantee and must pass the acceptance test described above. Neither construction is uniformly better. We use the LS--Gram polynomial here, while the Chebyshev polynomial provides a natural comparison and a possible initialization strategy.
\end{remark}

\paragraph{Cost and implementation.}
Let $s=q$ in the full-column-rank case and $s=p$ in the full-row-rank case.  The initialization forms an $s\times s$ Gram matrix, computes the powers $B, B^2, \ldots, B^{d+1}$, solves a least-squares problem with $d+1$ unknowns, and checks the spectral residual.  These costs must be included in runtime comparisons.  The normalization by $\rho$ controls the largest eigenvalue. In the full-column-rank case,
\[
    \kappa_2(M^TM)=\kappa_2(M)^2,
\]
and a high-degree monomial basis can be ill-conditioned.  We therefore use low degrees and solve \eqref{eq:vectorized-LS-Gram} by QR or SVD rather than by explicit normal equations.

\begin{algorithm}[H]
\caption{LS--Gram initialized Newton--Schulz}
\label{alg:souriau-poly-ns}
\begin{algorithmic}[1]
\REQUIRE Split-quaternion matrix $A$, degree $d$, tolerance $\varepsilon$, safety margin $\delta\in(0,1)$, and $\beta\in(0,2)$.
\STATE Form $M=A^\sigma$ and choose $\rho\geq\|M\|_2^2$.
\IF{$M=0$}
    \STATE \textbf{return} $0$.
\ENDIF
\STATE Set the safe initial point $X_0=(\beta/\rho)M^T$.
\IF{$M$ is certified to have full column rank}
    \STATE Set $B=M^TM/\rho$ and solve \eqref{eq:LS-Gram-problem}.
    \STATE Set $X_{\mathrm{poly}}=\rho^{-1}p_d(B)M^T$ and estimate $\eta=\|I-X_{\mathrm{poly}}M\|_2$.
    \IF{$\eta<1-\delta$}
        \STATE Set $X_0=X_{\mathrm{poly}}$.
    \ENDIF
\ELSIF{$M$ is certified to have full row rank}
    \STATE Set $C=MM^T/\rho$ and solve the analogous least-squares problem.
    \STATE Set $X_{\mathrm{poly}}=M^T\rho^{-1}p_d(C)$ and estimate $\eta=\|I-MX_{\mathrm{poly}}\|_2$.
    \IF{$\eta<1-\delta$}
        \STATE Set $X_0=X_{\mathrm{poly}}$.
    \ENDIF
\ENDIF
\STATE Apply $X_{k+1}=2X_k-X_kMX_k$ until
\[
    \frac{\|MX_kM-M\|_F}{\|M\|_F}\leq\varepsilon.
\]
\STATE \textbf{return} \(\sigma^{-1}(X_k)\).
\end{algorithmic}
\end{algorithm}

The polynomial does not replace Newton--Schulz, and it is not an exact Souriau inversion of the rectangular matrix.  It gives a matrix-dependent approximation of the inverse Gram factor.  When its accepted residual $\eta_d$ is smaller than the residual of the scaled transpose, Theorem~\ref{thm:polynomial-NS} predicts fewer Newton--Schulz iterations.  The numerical experiments will determine whether this reduction compensates for the additional initialization cost.

\section{Application to cross and CUR approximations}
\label{sec:cur}

Cross and CUR approximations represent a matrix using a small number of its
columns and rows.  Let
\[
    A\in\mathbb{Q}_s^{m\times n},
\]
and choose index sets
\[
    \mathcal{C}\subset\{1,2,\ldots,n\}, \qquad \mathcal{R}\subset\{1,2,\ldots,m\}, \qquad |\mathcal{C}|=|\mathcal{R}|=k.
\]
We define
\[
    C=A(:,\mathcal{C})\in\mathbb{Q}_s^{m\times k}, \qquad R=A(\mathcal{R},:)\in\mathbb{Q}_s^{k\times n},
\]
and the intersection matrix
\[
    W=A(\mathcal{R},\mathcal{C})\in\mathbb{Q}_s^{k\times k}.
\]
A CUR approximation has the form
\[
    A\approx CUR, \qquad U\in\mathbb{Q}_s^{k\times k}.
\]

Throughout this section, rank, range, and orthogonality are understood through the real representation introduced in Section~\ref{sec:preliminaries}.  In particular,
\[
    \operatorname{rank}_{\sigma}(A) = \operatorname{rank}(A^\sigma).
\]
We also use the represented column and row spaces
\[
    \mathcal{C}_{\sigma}(A) = \operatorname{range}(A^\sigma), \qquad \mathcal{R}_{\sigma}(A) =  \operatorname{range}\bigl((A^\sigma)^T\bigr).
\]

\subsection{The optimal middle matrix for fixed columns and rows}

For fixed \(C\) and \(R\), the middle matrix can be selected by solving
\[
    \min_{U\in\mathbb{Q}_s^{k\times k}} \|A-CUR\|_F.
\]
The next result gives a minimizer without requiring \(C\) to have full column rank or \(R\) to have full row rank.

\begin{theorem}[Optimal middle matrix for fixed \(C\) and \(R\)]
\label{thm:cur-optimal-middle}
Let
\[
    A\in\mathbb{Q}_s^{m\times n}, \qquad C\in\mathbb{Q}_s^{m\times k}, \qquad R\in\mathbb{Q}_s^{k\times n}.
\]
Define
\[
    U_\star=C^\dagger A R^\dagger.
\]
Then \(U_\star\) minimizes \(\|A-CUR\|_F\) over \(U\in\mathbb{Q}_s^{k\times k}\).  More precisely, if
\[
    P_C=CC^\dagger, \qquad P_R=R^\dagger R,
\]
then
\[
    CU_\star R=P_C A P_R,
\]
and, for every \(U\in\mathbb{Q}_s^{k\times k}\),
\begin{equation} \label{eq:cur-pythagorean}
    \|A-CUR\|_F^2 = \|A-P_C A P_R\|_F^2 + \|P_C A P_R-CUR\|_F^2.
\end{equation}
Consequently,
\[
    \min_U\|A-CUR\|_F = \|A-P_C A P_R\|_F.
\]
The minimizing middle matrix need not be unique.  It is unique if \(C^\sigma\) has full column rank and \(R^\sigma\) has full row rank.
\end{theorem}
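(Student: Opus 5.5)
The plan is to transfer everything to the real representation and prove the real statement there. By Lemma~\ref{lem:norm_relation}, $\|A-CUR\|_F^2=\tfrac12\|A^\sigma-C^\sigma U^\sigma R^\sigma\|_F^2$. By Lemma~\ref{lem:sigma_properties}, $\sigma$ is a bijection from $\mathbb{Q}_s^{k\times k}$ onto $\mathbb{R}^{2k\times2k}$, and it is multiplicative. So minimizing over split-quaternion $U$ is the same as minimizing $\|M-GYH\|_F$ over all real $Y\in\mathbb{R}^{2k\times2k}$, where $M=A^\sigma$, $G=C^\sigma$ and $H=R^\sigma$. Theorem~\ref{lem_mp_em} gives $(C^\dagger)^\sigma=G^\dagger$ and $(R^\dagger)^\sigma=H^\dagger$. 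Hence $(CU_\star R)^\sigma=GG^\dagger MH^\dagger H=P_GMP_H$, with $P_G=GG^\dagger$ and $P_H=H^\dagger H$ the real orthogonal projectors. Applying $\sigma^{-1}$ yields $CU_\star R=P_CAP_R$.

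Next I prove the Pythagorean identity in the real setting. For any $Y$, write
\[
M-GYH=(M-P_GMP_H)+(P_GMP_H-GYH).
\]
The second term equals $P_G(\cdot)P_H$ of itself, since $P_GG=G$ and $HP_H=H$. The first term is annihilated by the map $Z\mapsto P_GZP_H$. This map is the orthogonal projection in the Frobenius (trace) inner product, because $\langle P_GZP_H,W\rangle=\langle Z,P_GWP_H\rangle$ by symmetry of $P_G$ and $P_H$. So the two terms are orthogonal, and their cross term $\operatorname{tr}\bigl((M-P_GMP_H)^TP_G(\cdot)P_H\bigr)$ vanishes. Dividing the real identity by $2$ via Lemma~\ref{lem:norm_relation} and pulling back through $\sigma^{-1}$ gives \eqref{eq:cur-pythagorean}. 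Minimality of $U_\star$ and the value of the minimum follow immediately, since the second term is $\ge0$ and vanishes at $U_\star$.

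For non-uniqueness and uniqueness, I will use that the minimizers are exactly the $U$ with $C^\sigma U^\sigma R^\sigma=P_GMP_H$, equivalently $G(Y-Y_\star)H=0$. If $G$ has full column rank and $H$ has full row rank, then $G^\dagger G=I$ and $HH^\dagger=I$, so $Y=Y_\star$, and injectivity of $\sigma$ gives uniqueness. Otherwise, say $G$ has a null vector $v\neq0$. Then $Y_\star+vw^T$ is another real minimizer for any $w$, and it is a representative because $\sigma$ is onto $\mathbb{R}^{2k\times2k}$. This is where surjectivity matters: it lets perturbations be arbitrary real matrices rather than only structured ones.

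The main care point is this representation bookkeeping: minimizing over $U$ must be exactly minimizing over all real $2k\times2k$ matrices, and the inner product must be real (Euclidean), not the indefinite $N$. Lemma~\ref{lem:sigma_properties} and Lemma~\ref{lem:norm_relation} resolve both, so I expect no genuine obstacle beyond stating them explicitly.
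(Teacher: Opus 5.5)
Your proposal is correct and follows essentially the same route as the paper: you pass to $M=A^\sigma$, identify $(CU_\star R)^\sigma$ with $P_G M P_H$, use that $P_G,P_H$ are symmetric idempotents to show $M-P_GMP_H$ is Frobenius-orthogonal to $P_GMP_H-GYH$, pull the real Pythagorean identity back via Lemma~\ref{lem:norm_relation}, and get uniqueness from $G^\dagger G=I$ and $HH^\dagger=I$. The only addition is your perturbation $Y_\star+vw^T$ with $w\neq 0$, which uses the surjectivity of $\sigma$; it shows the minimizer is non-unique whenever the rank condition fails, a point the paper only asserts.
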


\begin{proof}
Set
\[
    M=A^\sigma,\qquad F=C^\sigma,\qquad G=R^\sigma.
\]
By Theorem~\ref{lem_mp_em},
\[
    (U_\star)^\sigma = F^\dagger M G^\dagger.
\]
Moreover,
\[
    (P_C)^\sigma=FF^\dagger=:P, \qquad  (P_R)^\sigma=G^\dagger G=:Q.
\]
The matrices \(P\) and \(Q\) are the real orthogonal projectors onto \(\operatorname{range}(F)\) and \(\operatorname{range}(G^T)\), respectively. Hence
\[
    (CU_\star R)^\sigma = FF^\dagger M G^\dagger G = PMQ.
\]

We next show that \(PMQ\) is the orthogonal projection of \(M\), in the Frobenius inner product, onto the linear space
\[
    \mathcal{S} = \{FZG:Z\in\mathbb{R}^{2k\times 2k}\}.
\]
Indeed, every \(Y=FZG\in\mathcal{S}\) satisfies
\[
    PYQ=Y.
\]
Using \(P^T=P\), \(Q^T=Q\), and \(P^2=P\), \(Q^2=Q\), we obtain
\[
\begin{aligned}
    \langle M-PMQ,Y \rangle_F
    &= \operatorname{tr}\bigl((M-PMQ)^T Y\bigr)\\
    &=\operatorname{tr}(M^T Y) - \operatorname{tr}(Q M^T P Y)\\
    &= \operatorname{tr}(M^T Y) - \operatorname{tr}(M^T P Y Q)\\
    &=0.
\end{aligned}
\]
Thus \(M-PMQ\) is orthogonal to every matrix in \(\mathcal{S}\).  Since \(F U^\sigma G\in\mathcal{S}\), the Pythagorean identity gives
\[
    \|M-FU^\sigma G\|_F^2 = \|M-PMQ\|_F^2 + \|PMQ-FU^\sigma G\|_F^2.
\]
Finally, the norm relation
\[
    \|Z^\sigma\|_F^2=2\|Z\|_F^2
\]
transfers this identity to split-quaternion matrices and yields
\eqref{eq:cur-pythagorean}.

If \(C^\sigma\) has full column rank and \(R^\sigma\) has full row rank, the map \(Z\mapsto C^\sigma ZR^\sigma\) is injective. Hence, the minimizing middle matrix is unique.
\end{proof}

\begin{corollary}[Exact CUR representation]\label{cor:cur-exact-ranges}
Under the assumptions of Theorem~\ref{thm:cur-optimal-middle}, suppose that
\[
    \mathcal{C}_{\sigma}(A) \subseteq \mathcal{C}_{\sigma}(C), \qquad \mathcal{R}_{\sigma}(A) \subseteq \mathcal{R}_{\sigma}(R).
\]
Then
\[
    A=C U_\star R, \qquad U_\star=C^\dagger A R^\dagger.
\]
\end{corollary}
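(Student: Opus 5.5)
The plan is to derive the corollary directly from Theorem~\ref{thm:cur-optimal-middle}, working throughout in the real representation. Set \(M=A^\sigma\), \(F=C^\sigma\), \(G=R^\sigma\), and let \(P=FF^\dagger\), \(Q=G^\dagger G\) be the real orthogonal projectors appearing in that proof. The theorem already gives \((CU_\star R)^\sigma=PMQ\). It therefore suffices to show that \(PMQ=M\); injectivity of \(\sigma\) (Lemma~\ref{lem:sigma_properties}) then yields \(A=CU_\star R\).

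The first step uses the column-space hypothesis. By definition, \(\operatorname{range}(M)=\mathcal{C}_\sigma(A)\subseteq\mathcal{C}_\sigma(C)=\operatorname{range}(F)\). Since \(P=FF^\dagger\) is the orthogonal projector onto \(\operatorname{range}(F)\), it acts as the identity on every column of \(M\). Hence \(PM=M\).

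The second step applies the same argument to \(M^T\). The row-space hypothesis gives \(\operatorname{range}(M^T)=\mathcal{R}_\sigma(A)\subseteq\mathcal{R}_\sigma(R)=\operatorname{range}(G^T)\). Because \(Q=G^\dagger G\) is symmetric and projects onto \(\operatorname{range}(G^T)\), we get \(QM^T=M^T\), and transposing gives \(MQ=M\). Combining the two steps, \(PMQ=(PM)Q=MQ=M\).

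As a consistency check, the Pythagorean identity \eqref{eq:cur-pythagorean} then gives \(\min_U\|A-CUR\|_F=\|A-P_CAP_R\|_F=0\), so \(U_\star\) attains exact reconstruction. There is no real obstacle in this argument. The only point needing care is the identification of \(FF^\dagger\) and \(G^\dagger G\) as the orthogonal projectors onto \(\operatorname{range}(F)\) and \(\operatorname{range}(G^T)\). This is a standard fact for the real Moore--Penrose inverse, and it is already recorded in the proof of Theorem~\ref{thm:cur-optimal-middle}. We also rely on \((P_C)^\sigma=P\) and \((P_R)^\sigma=Q\), both of which follow from Theorem~\ref{lem_mp_em}.
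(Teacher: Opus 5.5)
Your proof is correct and follows essentially the same route as the paper: the range inclusions make the real projectors \(FF^\dagger\) and \(G^\dagger G\) fix \(A^\sigma\) on the left and on the right, and injectivity of \(\sigma\) then gives \(A=CU_\star R\). The only cosmetic difference is that the paper first lifts \(CC^\dagger A=A\) and \(AR^\dagger R=A\) back to \(\mathbb{Q}_s\) separately and multiplies there, whereas you combine them as \(PMQ=M\) in the real representation and apply injectivity once.
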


\begin{proof}
The first inclusion implies
\[
    (CC^\dagger A)^\sigma = C^\sigma(C^\sigma)^\dagger A^\sigma = A^\sigma,
\]
and therefore \(CC^\dagger A=A\). The second inclusion similarly gives \(AR^\dagger R=A\). Hence
\[
    CU_\star R = CC^\dagger A R^\dagger R = A.
\]
\end{proof}

When \(C\) and \(R\) are formed from columns and rows of \(A\), their represented column and row spaces are already contained in those of \(A\). Therefore, the assumptions of Corollary~\ref{cor:cur-exact-ranges} are equivalent to
\[
    \operatorname{rank}_{\sigma}(C) = \operatorname{rank}_{\sigma}(R) = \operatorname{rank}_{\sigma}(A).
\]
No assumption of the form \(\operatorname{rank}_{\sigma}(C)=k\) is needed.  In particular, the number \(k\) of selected split-quaternion rows and columns should not be confused with the rank of the \(2m\times 2n\) real representative.

\subsection{The cross middle matrix}

The optimal factor \(U_\star=C^\dagger A R^\dagger\) involves the full matrix \(A\).  A cheaper cross approximation uses only the intersection matrix:
\[
    U_{\mathrm{cross}}=W^\dagger, \qquad A\approx C W^\dagger R.
\]
In general, \(W^\dagger\) is not equal to \(U_\star\), and the cross approximation is not the Frobenius-optimal approximation for fixed \(C\) and \(R\).  The two factors coincide under the exact rank-revealing condition given below.

\begin{theorem}[Exact cross approximation]
\label{thm:exact-cross}
Let \(C\), \(R\), and \(W\) be obtained from \(A\) as above.  If
\[
    \operatorname{rank}_{\sigma}(W) = \operatorname{rank}_{\sigma}(A),
\]
then
\[
    W^\dagger=C^\dagger A R^\dagger
\]
and
\[
    A=CW^\dagger R.
\]
\end{theorem}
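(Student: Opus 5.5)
Everything will be moved to the real representation and proved there, then pulled back through the bijectivity of \(\sigma\). Set \(M=A^\sigma\), \(F=C^\sigma\), \(G=R^\sigma\), \(N=W^\sigma\), and let \(r=\operatorname{rank}(M)=\operatorname{rank}(N)\).

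\textbf{Step 1: block structure.} Selecting split-quaternion columns \(\mathcal C\) of \(A\) corresponds to selecting the real columns \(\mathcal C\cup(n+\mathcal C)\) of \(M\); this is immediate from \eqref{eq:sigma_matrix}. Rows behave the same way. So, after a permutation, \(F\) consists of columns of \(M\), \(G\) of rows of \(M\), and \(N\) is the submatrix of \(M\) at their intersection. Equivalently, there are selection matrices \(S_c\in\mathbb{R}^{2n\times2k}\) and \(S_r\in\mathbb{R}^{2k\times2m}\) with
\[
F=MS_c,\qquad G=S_rM,\qquad N=S_rMS_c .
\]

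\textbf{Step 2: range equalities.} From \(\operatorname{rank}(N)\le\operatorname{rank}(F)\le\operatorname{rank}(M)=\operatorname{rank}(N)\) we get \(\operatorname{rank}(F)=r\), and likewise \(\operatorname{rank}(G)=r\). Since \(\operatorname{range}(F)\subseteq\operatorname{range}(M)\), equal dimensions force \(\operatorname{range}(F)=\operatorname{range}(M)\); similarly \(\operatorname{range}(G^T)=\operatorname{range}(M^T)\). Corollary~\ref{cor:cur-exact-ranges} then gives
\[
M=FF^\dagger MG^\dagger G .
\]
The same argument applied inside \(G\) uses \(\operatorname{rank}(N)=\operatorname{rank}(G)\) with \(N=GS_c\), so \(\operatorname{range}(N^T)=\operatorname{range}(G^T)\). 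Applied inside \(F\) with \(N=S_rF\), it gives \(\operatorname{range}(N)=\operatorname{range}(S_rF)\) together with \(\ker(S_rF)=\ker(F)\).

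\textbf{Step 3: the key identity \(N^\dagger=F^\dagger MG^\dagger\).} This is the step I expect to be the main obstacle, since the Moore--Penrose inverse is not multiplicative in general. I will handle it with a full-rank factorization. Write \(M=XY\) with \(X\in\mathbb{R}^{2m\times r}\) of full column rank and \(Y\in\mathbb{R}^{r\times2n}\) of full row rank. Then
\[
F=XY_c,\qquad G=X_rY,\qquad N=X_rY_c,
\]
where \(Y_c=YS_c\) and \(X_r=S_rX\). Because \(N\) has rank \(r\) and is a product of an \(r\)-column matrix with an \(r\)-row matrix, \(X_r\) must have full column rank and \(Y_c\) full row rank. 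The reverse-order law for full-rank factorizations then gives
\[
N^\dagger=Y_c^\dagger X_r^\dagger,\qquad F^\dagger=Y_c^\dagger X^\dagger,\qquad G^\dagger=Y^\dagger X_r^\dagger,
\]
where the middle formula uses that \(Y_c\) has full row rank and \(X\) full column rank. Hence
\[
F^\dagger MG^\dagger=Y_c^\dagger X^\dagger XYY^\dagger X_r^\dagger=Y_c^\dagger X_r^\dagger=N^\dagger,
\]
because \(X^\dagger X=I_r\) and \(YY^\dagger=I_r\). If the reverse-order law is not considered available, I would instead verify the four Penrose equations for \(F^\dagger MG^\dagger\) against \(N\) directly, using \(N=X_rY_c\); this reduces to the same identities.

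\textbf{Step 4: transfer back.} Combining Steps 2 and 3 gives \(FN^\dagger G=FF^\dagger MG^\dagger G=M\). Theorem~\ref{lem_mp_em} gives \((W^\dagger)^\sigma=N^\dagger\) and \((C^\dagger AR^\dagger)^\sigma=F^\dagger MG^\dagger\). Injectivity of \(\sigma\), together with the product property of Lemma~\ref{lem:sigma_properties}, then yields \(W^\dagger=C^\dagger AR^\dagger\) and \(A=CW^\dagger R\).
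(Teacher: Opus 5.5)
Your proof is correct and follows the paper's argument essentially step for step: the same paired selection matrices, the same full-rank factorization \(M=XY\) with the reverse-order law giving \(F^\dagger MG^\dagger=Y_c^\dagger X_r^\dagger=N^\dagger\), and the same appeal to Corollary~\ref{cor:cur-exact-ranges}. Your rank sandwich \(\operatorname{rank}(N)\le\operatorname{rank}(F)\le\operatorname{rank}(M)\) usefully makes explicit the range equalities that the paper only asserts. The side remarks at the end of Step~2 are never used and can be dropped; as written, \(\operatorname{range}(N^T)=\operatorname{range}(G^T)\) compares subspaces of \(\mathbb{R}^{2k}\) and \(\mathbb{R}^{2n}\), and the correct statement is \(\operatorname{range}(N)=\operatorname{range}(G)\).
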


\begin{proof}
Let
\[
    M=A^\sigma, \qquad F=C^\sigma, \qquad G=R^\sigma, \qquad H=W^\sigma.
\]
The selected split-quaternion rows and columns correspond to paired real rows and columns in the representation.  Thus, for suitable real selection matrices \(S\) and \(T\),
\[
    F=MT,\qquad G=S^T M,\qquad H=S^TMT.
\]

Let \(r=\operatorname{rank}(M)\) and choose a rank factorization
\[
    M=LK,
\]
where \(L\in\mathbb{R}^{2m\times r}\) has full column rank and \(K\in\mathbb{R}^{r\times 2n}\) has full row rank.  Define
\[
    K_{\mathcal C}=KT, \qquad L_{\mathcal R}=S^TL.
\]
Then
\[
    F=LK_{\mathcal C}, \qquad G=L_{\mathcal R}K, \qquad H=L_{\mathcal R}K_{\mathcal C}.
\]
The assumption \(\operatorname{rank}(H)=r\) implies that \(L_{\mathcal R}\) has full column rank and \(K_{\mathcal C}\) has full row rank.

For a product \(XY\) with \(X\) full column rank and \(Y\) full row rank, the reverse-order identity
\[
    (XY)^\dagger=Y^\dagger X^\dagger
\]
follows directly from
\[
    X^\dagger=(X^TX)^{-1}X^T, \qquad Y^\dagger=Y^T(YY^T)^{-1}.
\]
Applying this identity gives
\[
    H^\dagger = K_{\mathcal C}^\dagger L_{\mathcal R}^\dagger,
\]
as well as
\[
    F^\dagger=K_{\mathcal C}^\dagger L^\dagger, \qquad G^\dagger=K^\dagger L_{\mathcal R}^\dagger.
\]
Consequently,
\[
\begin{aligned}
    F^\dagger M G^\dagger
    &= K_{\mathcal C}^\dagger L^\dagger L K K^\dagger L_{\mathcal R}^\dagger\\
    &= K_{\mathcal C}^\dagger L_{\mathcal R}^\dagger\\
    &= H^\dagger.
\end{aligned}
\]
By compatibility of the Moore--Penrose inverse with \(\sigma\),
\[
    (W^\dagger)^\sigma = H^\dagger = F^\dagger M G^\dagger = (C^\dagger A R^\dagger)^\sigma.
\]
Injectivity of \(\sigma\) yields
\[
    W^\dagger=C^\dagger A R^\dagger.
\]
Finally, Theorem~\ref{thm:cur-optimal-middle} and the rank condition imply that the represented column and row spaces of \(C\) and \(R\) coincide with those of \(A\).  Corollary~\ref{cor:cur-exact-ranges} therefore gives
\[
    A=CW^\dagger R.
\]
\end{proof}

\begin{remark}
If \(W^\sigma\) is nonsingular, then \(W^\dagger=W^{-1}\).  For a singular or rectangular-rank cross, the Moore--Penrose inverse remains well defined. However, when
\[
    \operatorname{rank}_{\sigma}(W) < \operatorname{rank}_{\sigma}(A),
\]
the factor \(W^\dagger\) should be viewed as a practical cross choice, not as the optimal middle matrix from Theorem~\ref{thm:cur-optimal-middle}.
\end{remark}

\subsection{Practical construction and cost}

The two middle factors have different purposes.
\begin{itemize}
\item The factor
\[
    U_\star=C^\dagger A R^\dagger
\]
is optimal for the fixed sampled matrices \(C\) and \(R\), but its formation uses the full matrix \(A\).  For dense matrices, the multiplication involving \(A\) has a cost proportional to \(mnk\), in addition to the pseudoinverses of \(C\) and \(R\).

\item The factor
\[
    U_{\mathrm{cross}}=W^\dagger
\]
requires only the pseudoinverse of the \(k\times k\) split-quaternion core. Through the real representation, this is the pseudoinverse of a \(2k\times2k\) real matrix and has cubic cost in \(k\).  The approximation may be stored in factored form as \((C,W^\dagger,R)\), with storage proportional to
\[
    (m+n)k+k^2.
\]
Forming the complete matrix \(CW^\dagger R\) has an additional cost proportional to \(mnk\) and is unnecessary when only matrix-vector products or selected entries are required.
\end{itemize}

The index sets may be selected by pivoted QR, leverage-score sampling, max-volume procedures~\cite{goreinov1997pseudo,allen2024maximal,kliushev2026quaternionmaximumvolumesubmatrixselection}, or another rank-revealing method.  If the selection is performed on \(A^\sigma\), the real rows and columns must be selected in the paired blocks induced by the split-quaternion representation.  An arbitrary selection of individual real rows or columns does not necessarily correspond to a split-quaternion submatrix.

The construction used in the numerical section is summarized in the following.

\begin{algorithm}[H]
\caption{Cross and CUR approximation of a split-quaternion matrix}
\label{alg:split-cur}
\begin{algorithmic}[1]
\REQUIRE
\(A\in\mathbb{Q}_s^{m\times n}\), index sets \(\mathcal C\subset\{1,2,\ldots,n\}\) and \(\mathcal R\subset\{1,2,\ldots,m\}\), with \(|\mathcal C|=|\mathcal R|=k\).

\STATE Form
\[
    C=A(:,\mathcal C), \qquad R=A(\mathcal R,:), \qquad W=A(\mathcal R,\mathcal C).
\]

\IF{the Frobenius-optimal middle matrix for the fixed \(C,R\) is required}
    \STATE Compute
    \[
    U=C^\dagger A R^\dagger.
    \]
\ELSE
    \STATE Compute the cross factor
    \[
    U=W^\dagger.
    \]
\ENDIF

\RETURN The factors \(C,U,R\), representing \(A\approx CUR\).
\end{algorithmic}
\end{algorithm}

\section{Numerical Examples}
\label{sec:numerical}
The first three examples are MATLAB implementation checks. They were run in MATLAB R2023a on a computer with an Intel Core i7-12700H CPU and 32 GB of RAM. For these tests, the embedded Newton--Schulz iteration is stopped when
\[
    \frac{\|A^\sigma X_kA^\sigma-A^\sigma\|_F}{\|A^\sigma\|_F}<10^{-8},
\]
or after \(500\) iterations. MATLAB's SVD-based \texttt{pinv} provides the reference solution. These timings are based on one matrix realization and are therefore used only for implementation checks.

Example~\ref{subsec:souriau_initialization_experiments} gives the controlled multi-seed study of the polynomial initialization. It uses the same real representation as the theory, includes the initialization cost, and reports variability across independent matrices.

\begin{example}\label{subsec:ex1}
\textbf{(Small dense split-quaternion matrix).}
We generate $A\in\mathbb{Q}_s^{50\times60}$
with independent standard Gaussian entries in each of its four real components. Since the map \(A\mapsto A^\sigma\) is bijective and the distribution is continuous, the real representative $A^\sigma\in\mathbb{R}^{100\times120}$
has full row rank with probability one.

We compare the direct computation
\[
    (A^\sigma)^\dagger=\texttt{pinv}(A^\sigma)
\]
with Newton--Schulz initialized by
\[
    X_0=\frac{0.9}{\|A^\sigma\|_2^2}(A^\sigma)^T.
\]
After convergence, the relative difference from the direct reference is
\[
    \frac{\|X_{\mathrm{NS}}-(A^\sigma)^\dagger\|_F} {\|(A^\sigma)^\dagger\|_F} \approx 2.3\times10^{-12}.
\]

\begin{table}[H]
\centering
\caption{Implementation check for a \(50\times60\) split-quaternion matrix.}
\label{tab:small}
\begin{tabular}{lcc}
\toprule
Method & Time (s) & Iterations \\
\midrule
Direct SVD-based \texttt{pinv} & \(0.42\) & -- \\
Newton--Schulz & \(0.11\) & \(19\) \\
\bottomrule
\end{tabular}
\end{table}

For this realization, the iterative method reaches the direct reference accuracy and is faster in the reported MATLAB implementation. This example is mainly a check of the embedding, inverse map, and stopping criterion.
\end{example}

\begin{example}\label{subsec:ex2}
\textbf{(Medium dense split-quaternion matrix).}
We next generate $A\in\mathbb{Q}_s^{200\times250}$
with the same distribution. Its real representative has size \(400\times500\) and is full row rank with probability one. Table~\ref{tab:medium} compares the direct SVD-based pseudoinverse with the standard Newton--Schulz iteration. The residual-checked extrapolation heuristic from the extrapolation discussion in Section~\ref{Sec:pro} is not used in this comparison.

\begin{table}[H]
\centering
\caption{Implementation check for a \(200\times250\) split-quaternion matrix.}
\label{tab:medium}
\begin{tabular}{lccc}
\toprule
Method & Time (s) & Iterations & Relative error \\
\midrule
Direct SVD-based \texttt{pinv} & \(14.2\) & -- & -- \\
Newton--Schulz & \(3.8\) & \(24\) & \(4.1\times10^{-11}\) \\
\bottomrule
\end{tabular}
\end{table}

The Newton--Schulz computation is about \(3.7\) times faster for this realization while remaining close to the direct reference. The ratio is implementation- and hardware-dependent; it is not an asymptotic complexity claim.
\end{example}

\begin{example}\label{subsec:ex3}
\textbf{(Cross approximation of a low-rank split-quaternion matrix).}
We construct
\[
    A=UV^H, \qquad U\in\mathbb{Q}_s^{300\times20}, \qquad V\in\mathbb{Q}_s^{400\times20},
\]
with random entries. The factorization uses \(20\) split-quaternion latent components. For generic factors,
\[
    \operatorname{rank}_\sigma(A)=40.
\]
We select \(20\) split-quaternion columns and rows by applying a block-aware max-volume procedure to \(A^\sigma\), so that paired real rows and columns are kept together. We then form
\[
    C=A(:,\mathcal C), \qquad R=A(\mathcal R,:), \qquad W=A(\mathcal R,\mathcal C).
\]

We compare the Frobenius-optimal middle factor
\[
    U_\star=C^\dagger A R^\dagger
\]
with the cross factor \(W^\dagger\). For the latter, the \(20\times20\) split-quaternion core corresponds to a \(40\times40\) real matrix. Its pseudoinverse is computed either by a direct SVD or by Newton--Schulz.

\begin{table}[H]
\centering
\caption{Cross approximation of a \(300\times400\) split-quaternion matrix with \(20\) latent split-quaternion components.}
\label{tab:cross}
\begin{tabular}{lcc}
\toprule
Middle factor & Time (s) &
\(\|A-CUR\|_F/\|A\|_F\) \\
\midrule
\(C^\dagger A R^\dagger\) & \(5.1\) & \(1.2\times10^{-14}\) \\
Direct \(W^\dagger\) & \(0.038\) & \(3.4\times10^{-14}\) \\
Newton--Schulz \(W^\dagger\) & \(0.009\) & \(4.1\times10^{-12}\) \\
\bottomrule
\end{tabular}
\end{table}

The selected cross is rank revealing, and both direct middle factors reconstruct the matrix to nearly machine precision. The iterative core pseudoinverse is about \(4.2\) times faster than the direct core pseudoinverse in this implementation, with a reconstruction error of order \(10^{-12}\). Since the core is small, this result should be read as an implementation check rather than as evidence of a different asymptotic cost.
\end{example}

\begin{example}\label{subsec:souriau_initialization_experiments} \textbf{(Polynomial initialization for Newton--Schulz).}
We now test the polynomial initialization introduced in Section~\ref{subsec:souriau-poly-init}. 
The complete Python implementation is available in  the \href{https://colab.research.google.com/drive/1faZlKppRoyZSKIZaMydV04vxqeFIKW4P?usp=sharing} {\texttt{Google Colab notebook}}.

All computations use the real representation fixed in Section~\ref{sec:preliminaries}. Thus, for
\[
    A=A_1+A_2\mathbf{i}+A_3\mathbf{j}+A_4\mathbf{k} \in\mathbb{Q}_s^{m\times n},
\]
we work with
\[
    M=A^\sigma = \begin{bmatrix}
    A_1+A_3 & -A_2+A_4\\
    A_2+A_4 & A_1-A_3
    \end{bmatrix}.
\]
The test matrices have full column rank. We use the iteration
\[
    X_{k+1}=X_k(2I-MX_k)
\]
and stop when
\[
    \frac{\|I-X_kM\|_F}{\sqrt{2n}}<10^{-10},
\]
with at most \(300\) iterations.

The matrices are constructed with prescribed singular values. We generate real matrices \(U\in\mathbb{R}^{m\times n}\) and \(V\in\mathbb{R}^{n\times n}\) with orthonormal columns, and set
\[
    A = \sum_{\ell=1}^{n} \sigma_\ell u_\ell v_\ell^T q_\ell.
\]
The values \(\sigma_\ell\) are geometrically distributed between \(1\) and \(1/\kappa\). The factors \(q_\ell\) are random split-quaternion units of the
form
\[
    \cos(\theta_\ell)+\sin(\theta_\ell)\mathbf{i} \quad\text{or}\quad \cos(\theta_\ell)\mathbf{j}+\sin(\theta_\ell)\mathbf{k}.
\]
Their real representatives are orthogonal. Therefore, the singular values of $M=A^\sigma$
are the prescribed values \(\sigma_\ell\), each repeated twice, and
$\kappa_2(M)=\kappa.$
This construction gives nonzero contributions in all four split-quaternion components while keeping the spectrum under exact control.

Since the largest prescribed singular value is equal to one, we also have
\[
    \|M\|_2=1, \qquad \rho=\|M\|_2^2=1.
\]
Thus, the normalization is known exactly, and no spectral-norm estimation is required for the initializations in these controlled experiments.

We compare four initializations:
\begin{enumerate}
\item the scaled transpose
\[
    X_0=\frac{1}{\rho}M^T, \qquad \rho=\|M\|_2^2;
\]
\item a degree-\(4\) truncated Neumann polynomial;
\item a degree-\(4\) Chebyshev inverse polynomial using the known interval
\[
    [\kappa^{-2},1]
\]
of the normalized Gram matrix;
\item the proposed degree-\(4\) LS--Gram polynomial.
\end{enumerate}
The Chebyshev method is an informed baseline: it uses the exact lower and upper spectral bounds. The LS--Gram construction uses the matrix itself and the upper scaling \(\rho\), but does not require the lower spectral bound.
For the Neumann baseline, we use
\[
p_d^{\mathrm{Neu}}(t)
=
\sum_{\ell=0}^{d}(1-t)^\ell,
\qquad
X_0^{\mathrm{Neu}}
=
\frac{1}{\rho}p_d^{\mathrm{Neu}}(B)M^T.
\]

For each data point, we use five independent matrices and repeat each timing three times. We report medians, with interquartile ranges in the figures. The total time includes both the construction of the initial point and the Newton--Schulz refinement. The reference pseudoinverse is computed with \texttt{numpy.linalg.pinv} using \texttt{rcond}$=10^{-14}$. The reported run used Python~3.12.13, NumPy~2.0.2, and OpenBLAS~0.3.27 with two threads in a single Google Colab session. Since Colab hardware can vary, the timing comparison is interpreted only within this run.

We first study the polynomial degree on split-quaternion matrices of size \(200\times100\), with \(\kappa=10^3\). Table~\ref{tab:lsgram-degree} shows the results. Increasing the degree improves the initial residual and reduces the iteration count. The gain becomes small beyond degree \(4\): degree \(6\) saves one additional iteration, but gives almost the same median total time. We therefore use degree \(4\) in the remaining tests.

\begin{table}[ht!]
\centering
\caption{Degree ablation for split-quaternion matrices of size
\(200\times100\), with \(\kappa=10^3\). The values are medians over five
independent matrices. The total time includes initialization and refinement.}
\label{tab:lsgram-degree}
\begin{tabular}{lcccc}
\hline
Initialization
& \(\|I-X_0M\|_F/\sqrt{2n}\)
& Iter.
& Time (s)
& Relative error \\ \hline
Scaled transpose & \(0.942\) & \(25\) & \(0.115\) & \(7.36\times10^{-14}\) \\
LS--Gram, \(d=0\) & \(0.925\) & \(24\) & \(0.114\) & \(7.27\times10^{-14}\) \\
LS--Gram, \(d=2\) & \(0.858\) & \(21\) & \(0.108\) & \(1.49\times10^{-11}\) \\
LS--Gram, \(d=4\) & \(0.818\) & \(20\) & \(0.105\) & \(7.50\times10^{-14}\) \\
LS--Gram, \(d=6\) & \(0.789\) & \(19\) & \(0.103\) & \(8.20\times10^{-14}\) \\ \hline
\end{tabular}
\end{table}

We next fix \(\kappa=10^3\) and consider the split-quaternion sizes
\[
    100\times50,\qquad 200\times100,\qquad 400\times200,\qquad 600\times300.
\]
The iteration counts are independent of the dimension for this controlled spectrum: the scaled transpose requires \(25\) iterations, the Neumann polynomial \(22\) iterations for the three largest sizes, the Chebyshev polynomial \(19\) iterations, and LS--Gram \(20\) iterations. Thus, LS--Gram reduces the iteration count by \(20\%\) relative to the scaled transpose.

Figure~\ref{fig:lsgram-time-size} reports the complete time. The polynomial setup is visible for the smallest matrix, where all methods have comparable cost. For the three larger sizes, LS--Gram reduces the median total time by about \(13\%\), \(7\%\), and \(7\%\), respectively, compared with the scaled transpose. The Chebyshev method is faster for some intermediate sizes, but it uses the exact spectral interval. At the largest tested size, LS--Gram has the lowest median time of $2.29\ \text{s}$ 

compared with \(2.46\) s for the scaled transpose.

\begin{figure}[ht!]
    \centering
    \includegraphics[width=0.72\textwidth]
    {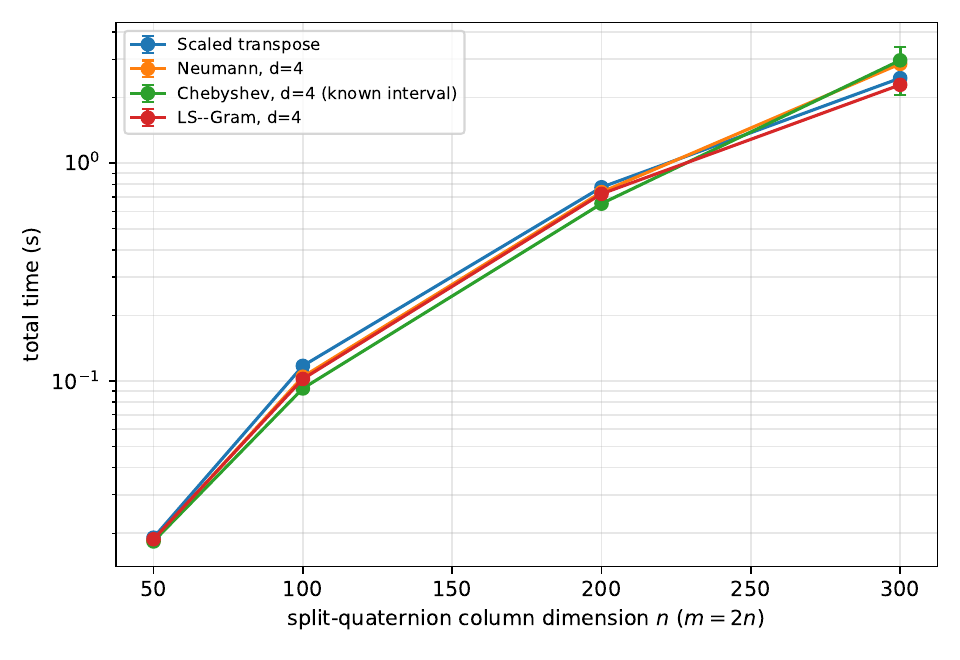}
    \caption{Total time as the matrix size increases, with \(\kappa=10^3\). The split-quaternion matrices have size \(m\times n\) with \(m=2n\). The total time includes initialization and Newton--Schulz refinement. The points and error bars are the median and interquartile range over five matrices.}
    \label{fig:lsgram-time-size}
\end{figure}

Finally, we fix the split-quaternion size to \(300\times150\) and vary
\[
    \kappa\in\{10,10^2,10^3,10^4\}.
\]
Figure~\ref{fig:lsgram-iters-condition} shows that the advantage of the polynomial initializations remains visible as the problem becomes more ill-conditioned. The LS--Gram iteration counts are
\[
    7,\quad 13,\quad 20,\quad 27,
\]
compared with
\[
    12,\quad 18,\quad 25,\quad 31
\]
for the scaled transpose. The informed Chebyshev baseline requires
\[
    6,\quad 13,\quad 19,\quad 26
\]
iterations. Thus, LS--Gram remains within one iteration of Chebyshev over the whole range, without using the lower spectral bound.

\begin{figure}[ht!]
    \centering
    \includegraphics[width=0.72\textwidth]
    {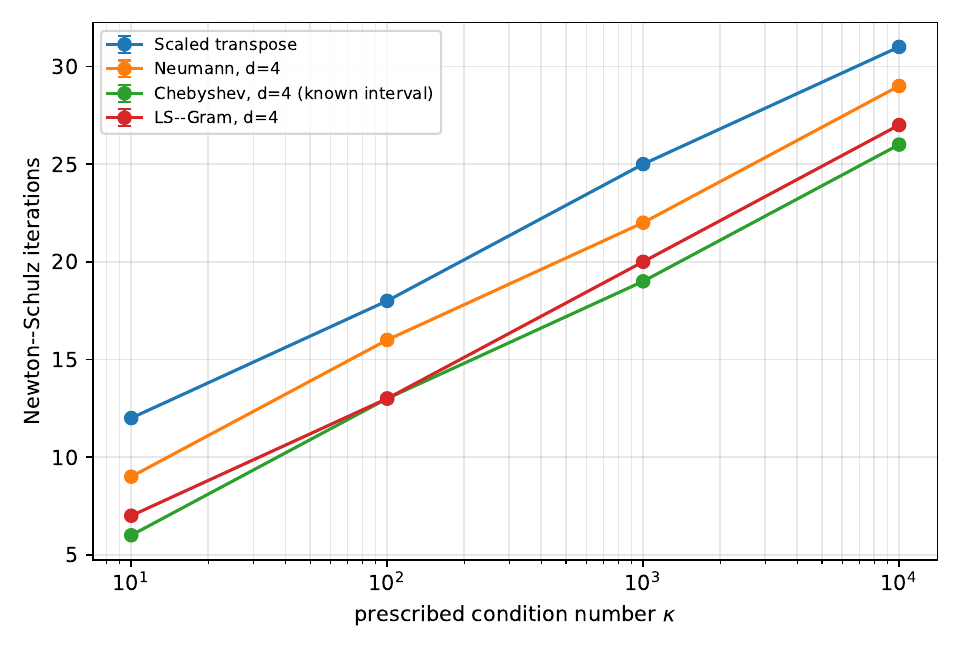}
    \caption{Newton--Schulz iterations as the prescribed condition number increases for split-quaternion matrices of size \(300\times150\). The points and error bars are the median and interquartile range over five matrices.}
    \label{fig:lsgram-iters-condition}
\end{figure}

All LS--Gram initializations passed the spectral acceptance test, and all runs converged. The relative errors with respect to the reference pseudoinverse and the four Penrose residuals were at or below the prescribed numerical accuracy. The notebook also contains tests with clustered and randomly perturbed logarithmic spectra; the same conclusions were observed.

These results support three conclusions. First, the LS--Gram fit gives a consistently better starting point than the scaled transpose. Second, degree \(4\) gives a good compromise between setup cost and iteration reduction. Third, the method is competitive with a Chebyshev polynomial supplied with the exact spectral interval, while requiring less spectral information.
\end{example}

\section{Conclusion and Future Work}
\label{sec:conclusion}

We developed a consistent framework for computing the Moore--Penrose inverse of split-quaternion matrices. The \(2\times 2\) real representation clarifies the role of the (i)-conjugate transpose and gives a direct treatment of rectangular, rank-deficient, and zero-divisor cases. It also allows us to transfer the Newton--Schulz analysis from real matrices to both embedded and native split-quaternion iterations.

We also introduced a low-degree LS--Gram polynomial initialization inspired by Souriau's inverse recursion. The method adapts the initial point to the Gram matrix, while a spectral acceptance test and a safe fallback preserve the convergence guarantee. The numerical experiments show that this initialization improves the initial residual and reduces the number of Newton--Schulz iterations. It also remains competitive with an informed Chebyshev initialization.

Finally, we extended cross and CUR approximation results to split-quaternion matrices. For fixed sampled rows and columns, we characterized a Frobenius-optimal middle factor and gave a rank-revealing condition under which the cheaper cross factor yields an exact reconstruction.

Possible extensions include more stable polynomial bases, adaptive degree selection, matrix-free residual estimates, and larger sparse experiments. The real representation may also be used to define and study the Drazin inverse of square split-quaternion matrices. A complete analysis of this extension is left for future work.

\bibliographystyle{elsarticle-num} 
\bibliography{ref}

\appendix
\section{Auxiliary proofs}
\label{app:auxiliary-proofs}
This appendix collects proofs of standard representation and Moore--Penrose identities used in the main text.

\subsection{Proof of Lemma~\ref{lem:sigma_properties}}
\label{app:proof-sigma-properties}
\begin{proof}
The additivity and real homogeneity follow directly from the block formula. For the product property, write \(A=(a_{ij})\) and \(B=(b_{jk})\). If the two real coordinates associated with each scalar entry are interlaced, the entrywise real representative is the block matrix \([a_{ij}^\sigma]\). The matrix in \eqref{eq:sigma_matrix} is obtained from this entrywise representative by fixed row and column permutations that group the first real coordinates and the second real coordinates. For each \(i,k\),
\[
    ((AB)_{ik})^\sigma =\left(\sum_j a_{ij}b_{jk}\right)^\sigma =\sum_j a_{ij}^\sigma b_{jk}^\sigma,
\]
so the entrywise representative of \(AB\) is the product of the entrywise representatives of \(A\) and \(B\). The intermediate permutation matrices cancel, and therefore \((AB)^\sigma=A^\sigma B^\sigma\), proving \eqref{thir_pro}.

To verify the inverse formula, substitute \eqref{eq:sigma_inverse} into \eqref{eq:sigma_matrix}. For example,
\[
    X_1+X_3 =\frac{Y_{11}+Y_{22}+Y_{11}-Y_{22}}{2} =Y_{11},
\]
and similarly
\[
    -X_2+X_4=Y_{12},\qquad X_2+X_4=Y_{21},\qquad X_1-X_3=Y_{22}.
\]
Thus \(X^\sigma=Y\). The formulas also show uniqueness, so \(\sigma\) is bijective.
\end{proof}

\subsection{Proof of Lemma~\ref{lem:tau_transpose}}
\label{app:proof-tau-transpose}
\begin{proof}
The identities \(\tau^2(q)=q\) and \((\tau(q))^\sigma=(q^\sigma)^T\) follow by direct substitution in \eqref{eq:sigma_scalar}. Using the injectivity of \(\sigma\) and the product property,
\[
    (\tau(pq))^\sigma =((pq)^\sigma)^T =(p^\sigma q^\sigma)^T =(q^\sigma)^T(p^\sigma)^T =(\tau(q)\tau(p))^\sigma.
\]
Therefore \(\tau(pq)=\tau(q)\tau(p)\). The matrix identities follow entrywise, taking into account that transposition reverses the order of matrix multiplication.
\end{proof}

\subsection{Proof of Lemma~\ref{lem:norm_relation}}
\label{app:proof-norm-relation}
\begin{proof}
Expanding the four blocks in \eqref{eq:sigma_matrix} gives
\begin{align*}
    \|A^\sigma\|_F^2 ={}&\|A_1+A_3\|_F^2+\|-A_2+A_4\|_F^2\\
    &+\|A_2+A_4\|_F^2+\|A_1-A_3\|_F^2.
\end{align*}
Using the parallelogram identity twice,
\[
    \|U+V\|_F^2+\|U-V\|_F^2 =2\|U\|_F^2+2\|V\|_F^2,
\]
we obtain
\[
    \|A^\sigma\|_F^2 =2\sum_{\ell=1}^4\|A_\ell\|_F^2 =2\|A\|_F^2.
\]
\end{proof}

\subsection{Proof of Proposition~\ref{prop:mp_basic_properties}}
\label{app:proof-basic-mp}
\begin{proof}
We prove the identities through the real representation. For example,
\[
    \bigl((A^\dagger)^\dagger\bigr)^\sigma =\bigl((A^\dagger)^\sigma\bigr)^\dagger =\bigl((A^\sigma)^\dagger\bigr)^\dagger =A^\sigma.
\]
Injectivity gives \((A^\dagger)^\dagger=A\). Likewise,
\begin{align*}
    \bigl((A^H)^\dagger\bigr)^\sigma
    &=\bigl((A^H)^\sigma\bigr)^\dagger =\bigl((A^\sigma)^T\bigr)^\dagger\\
    &=\bigl((A^\sigma)^\dagger\bigr)^T =\bigl((A^\dagger)^H\bigr)^\sigma,
\end{align*}
which proves the second identity. The scaling identity follows from the corresponding real identity.

If \(U\) and \(V\) are \(H\)-unitary, then \(U^\sigma\) and \(V^\sigma\) are real orthogonal matrices. Therefore,
\begin{align*}
    \bigl((UAV)^\dagger\bigr)^\sigma
    &=\bigl(U^\sigma A^\sigma V^\sigma\bigr)^\dagger\\
    &=(V^\sigma)^T(A^\sigma)^\dagger(U^\sigma)^T\\
    &=\bigl(V^H A^\dagger U^H\bigr)^\sigma.
\end{align*}
Injectivity proves the fourth identity. Finally, the Penrose equations imply
\[
    (AA^\dagger)^H=AA^\dagger, \qquad (A^\dagger A)^H=A^\dagger A,
\]
and
\[
    (AA^\dagger)^2=A(A^\dagger A A^\dagger)=AA^\dagger, \qquad (A^\dagger A)^2=A^\dagger(AA^\dagger A)=A^\dagger A.
\]
\end{proof}

\end{document}